\newcommand{\R}{\mathbb{R}}
\newcommand{\C}{\mathbb{C}}
\newtheorem{thm}{Theorem}
\newtheorem{defn}[thm]{Definition}
\newtheorem{lemma}[thm]{Lemma}
\newtheorem{prop}[thm]{Proposition}
\newtheorem{cor}[thm]{Corollary}
\author{Brett Chenoweth}
\date{\today}
\begin{document}

\title{Carleman approximation of maps into Oka manifolds}
\begin{abstract} 
In this paper we obtain a Carleman approximation theorem for maps from Stein manifolds to Oka manifolds.
More precisely, we show that under suitable complex analytic conditions on a totally real set  \( M \) of 
a Stein manifold $X$, every smooth map \( X \rightarrow Y \)  to an Oka manifold $Y$ 
satisfying the Cauchy-Riemann equations along \( M \) up to  order \( k \) can be \( \mathscr{C}^k \)-Carleman approximated 
by holomorphic maps \( X \rightarrow Y \). Moreover, if \( K \) is a compact \( \mathscr{O}(X) \)-convex set such that \( K \cup M \) 
is  \( \mathscr{O}(X) \)-convex, then we can \( \mathscr{C}^k \)-Carleman approximate maps  which satisfy 
the Cauchy-Riemann equations up to order \( k \) along \( M \)  and are holomorphic on a neighbourhood of \( K \),
or merely in the interior of $K$ if the latter set is the closure of a strongly pseudoconvex domain. 
\end{abstract}

\subjclass[2010]{32E30, 32V40, 32E10}
\date{\today}
\keywords{Stein manifold,  Oka manifold, holomorphic map, Carleman approximation,  bounded exhaustion hulls.}

\maketitle

%
%
\section{Introduction}\label{sec:intro}

In 1927 Carleman \cite{carleman1927} proved that for every pair of continuous functions 
 \( f, \epsilon \in \mathscr{C}(\mathbb{R}) \) such that \( \epsilon \) is strictly positive, there is an entire function \( g \) 
 on the complex plane $\C$ such that \( |f(x)-g(x)|< \epsilon(x) \) for every \( x \in \mathbb{R} \).
This improves Weierstrass's theorem on uniform approximation of continuous functions on compact intervals in $\R$
by holomorphic polynomials \cite{Weierstrass1985}. 

There are a number of generalisations of this theorem in the literature already, see Alexander \cite{alexander1979}, 
Gauthier and Zeron \cite{gauthier2002},  Hoischen \cite{hoischen1973}, Scheinberg \cite{scheinberg1976}, Manne \cite{manne1993} 
and Manne, \O{}vrelid and Wold \cite{manne2011}. These results concern Carleman approximation of \emph{functions}.
It is a  natural question to consider whether this type of approximation can be done for maps into Oka manifolds
as these are precisely the manifolds for which there exists an analogue of the Runge approximation theorem
for maps from Stein manifolds. 
For a comprehensive introduction to Oka theory, see Forstneri\v{c}'s book  \cite{forstneric2017} 
or Forstneri\v{c} and L\'{a}russon's survey article \cite{forstneric2011}.

Let \( \mathbb{N} = \{ 1, 2, \dots \} \). In this paper we prove the following  Carleman approximation result for maps from Stein manifolds to Oka manifolds.

%
%
\begin{thm} \label{main} 
Let \( X \) be a Stein manifold and \( Y \) be an Oka manifold. If \( K \subset X \) is a compact \( \mathscr{O}(X) \)-convex subset 
and \( M \subset X \) is a closed totally real set of class \( \mathscr{C}^r \) (\(r \in \mathbb{N}\)) which is \( \mathscr{O}(X) \)-convex, 
has bounded exhaustion hulls, and such that \( S = K \cup M \) is \( \mathscr{O}(X) \)-convex, then  for any \( k \in \{ 0,1, \dots, r \}\), the set \( S \) admits 
\( \mathscr{C}^k\)-Carleman approximation of maps \( f \in \mathscr{C}^k(X,Y) \) which are \( \bar \partial \)-flat to order \( k \) along \( M \) 
and holomorphic on a neighbourhood of \( K \).
\end{thm}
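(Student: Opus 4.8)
The plan is to derive the global Carleman statement from a \emph{compact} Mergelyan-type approximation result for maps into $Y$ on $\mathscr{O}(X)$-convex sets of the form (convex part) $\cup$ (totally real part), and then to feed this building block into an exhaustion induction in which the bounded exhaustion hulls of $M$ keep the relevant holomorphic hulls under control. Throughout I fix a complete distance $d$ on $Y$ (induced by a Riemannian metric) together with local charts, and a strictly positive $\epsilon \in \mathscr{C}(X)$; the goal is to produce $g \in \mathscr{O}(X,Y)$ whose $\mathscr{C}^k$-jet distance from $f$ is bounded by $\epsilon$ at every point of $S$.

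First I would linearise the target using the Oka property. Since $Y$ is Oka it carries a dominating spray $s \colon E \to Y$, with $E \to Y$ a holomorphic vector bundle, $s$ restricting to the identity on the zero section and $ds$ surjective there. Near a given map $h$, competing maps are written as $y \mapsto s(\xi(y))$ for small holomorphic sections $\xi$ of the pullback bundle $h^*E$ over a Stein domain in $X$, which converts each approximation step into an approximation problem for sections of a holomorphic vector bundle. For such sections the two classical tools are available: on the $\mathscr{O}(X)$-convex convex part the Oka--Weil theorem for Oka targets, and along the totally real $M$ of class $\mathscr{C}^r$ the standard fact (H\"ormander--Wermer / Range--Siu type, applied in charts through $s$) that $\bar\partial$-flat $\mathscr{C}^k$ data with $k \le r$ is $\mathscr{C}^k$-approximable by holomorphic data. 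Combining these on $S$ intersected with a member of the exhaustion gives the compact building block: a $\mathscr{C}^k$ map holomorphic near $K$ and $\bar\partial$-flat to order $k$ along $M$ is $\mathscr{C}^k$-approximable on that piece by global holomorphic maps $X \to Y$.

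Finally I would run the exhaustion induction. Choose a normal exhaustion $K = L_0 \subset L_1 \subset \cdots$ of $X$ by $\mathscr{O}(X)$-convex compacts adapted to the bounded exhaustion hulls of $M$, so that for each $j$ the hull $\widehat{L_{j-1} \cup (M \cap L_j)}$ is again $\mathscr{O}(X)$-convex, compact, and contained in a controlled member of the exhaustion whose intersection with $M$ essentially does not exceed $M \cap L_j$. I then build $g_j \in \mathscr{O}(X,Y)$ inductively so that $g_j$ agrees with $f$ to within the prescribed tolerance on $S \cap L_j$ in $\mathscr{C}^k$, while differing from $g_{j-1}$ by a summable $\mathscr{C}^k$-error on $L_{j-1}$. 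The inductive step patches $g_{j-1}$ (valid on $L_{j-1}$) with $f$ (valid on the new totally real slab $M \cap (L_j \setminus \mathring L_{j-1})$) by a smooth cutoff, using that on the overlap $g_{j-1}$ is already $\mathscr{C}^k$-close to $f$; the patched map is $\bar\partial$-flat to order $k$ along $M$ modulo a controlled error, and the compact building block yields the global $g_j$. The limit $g = \lim_j g_j$ converges in $\mathscr{C}^k$ on compacts because the corrections are summable, and satisfies the Carleman estimate on $S$.

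I expect the main obstacle to be the bookkeeping in this induction, namely simultaneously (i) keeping the glued intermediate map $\bar\partial$-flat to order $k$ along $M$ after the cutoff, which requires that the $\mathscr{C}^k$-difference between $g_{j-1}$ and $f$ on the overlap be small enough that the cutoff contributes only a negligible $\bar\partial$-error to order $k$, and (ii) guaranteeing that approximating this map on $\widehat{L_{j-1} \cup (M \cap L_j)}$ does not destroy the frozen estimate on $L_{j-1}$. Part (ii) is precisely where the bounded exhaustion hulls are indispensable: without control on these hulls the set on which one must run Oka--Weil approximation could reach far beyond $L_j$ into regions where earlier approximations have been fixed, breaking the telescoping. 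Verifying that the hull condition produces $\mathscr{O}(X)$-convex approximation sets staying near $L_j \cup M$, so that both requirements can be met with summable errors, is the technical heart of the argument.
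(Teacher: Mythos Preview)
Your two-level scheme (a compact Mergelyan step for maps into $Y$ on sets of the form $\text{convex}\cup\text{totally real}$, followed by an exhaustion induction governed by the bounded E-hulls of $M$) is exactly the architecture the paper uses. The exhaustion is organised via Lemma~\ref{lem:Lemma3}, and the induction is Theorem~\ref{submain}, so your identification of ``(ii)'' as the place where bounded E-hulls enter is correct.

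There is, however, one genuine gap in your linearisation. You write ``since $Y$ is Oka it carries a dominating spray $s\colon E\to Y$''. That is the definition of Gromov's \emph{ellipticity}; elliptic implies Oka, but the converse is not known, so you cannot assume a global dominating spray on $Y$. The paper sidesteps this: it linearises over the \emph{graph} of the current map rather than over $Y$. Concretely, for the compact step (Lemma~\ref{induction}) one takes the graph $F(S\cap L'')\subset X\times Y$, invokes Poletsky's theorem to get a Stein neighbourhood $W$, then uses a fibre-preserving holomorphic embedding $G\colon W\hookrightarrow X\times\mathbb{C}^n$ together with a holomorphic retraction $r\colon\Omega\to G(W)$ of a tube. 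The composite $\Phi=p_2\circ G^{-1}\circ r$ plays the role your spray would have played: it converts the manifold-valued problem into a $\mathbb{C}^n$-valued one to which Proposition~\ref{mergelyan} applies, and afterwards the basic Oka property with approximation (not a spray) is used to globalise. Your argument becomes correct once you replace the spray by this graph-embedding device.

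A second, smaller difference is the inductive invariant. You carry globally holomorphic $g_j\in\mathscr{O}(X,Y)$ close to $f$ only on $S\cap L_j$, and then re-patch $g_{j-1}$ with $f$ on the new slab at each stage; this forces you to worry about issue ``(i)'' (restoring $\bar\partial$-flatness after a cutoff between two merely $\mathscr{C}^k$-close maps). The paper instead carries $f_l\in\mathcal{H}_k(X,Y;M,K_l)$: maps that are $\bar\partial$-flat along \emph{all} of $M$ and holomorphic only on a neighbourhood of $K_l$, with the Carleman estimate already holding on all of $S$. Because the modification in Lemma~\ref{induction} is supported in $L''$ and done at the level of the $\mathbb{C}^n$-valued lift using $\bar\partial$-flat cutoffs, the new $f_{l+1}$ is automatically $\bar\partial$-flat on $M$ and literally equals $f_l$ outside a compact set, so no re-patching is needed and issue ``(i)'' evaporates. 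Your variant can be made to work, but the paper's bookkeeping is cleaner for exactly the reason you anticipated.
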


Intuitively, Theorem \ref{main} states the following. Given a map \( f \) satisfying the conditions mentioned above, there is \( g\in \mathscr{O}(X,Y) \) whose value and partial derivatives up to order \( k \) along $K \cup M$ are close to the value and  respective derivatives
of $f$, where closeness is measured with respect to fixed
systems of coordinate charts on the source and the target manifold.

At the end of \S \ref{oka} we explain how interpolation can be added to this result.

Observe that the conditions of Theorem \ref{main} are optimal due to Manne, \O{}vrelid and Wold's characterisation of totally real sets that admit \( \mathscr{C}^k \)-Carleman approximation \cite{manne2011}. 
By modifying our proof slightly, in a way that  will be made precise in \S \ref{oka}, we also obtain the following result.

\begin{thm}\label{thm2} Let \( X \), \( Y \), \( M \) and \( K \) be as in Theorem \ref{main}.
If in addition \( K \) is the closure of a strongly pseudoconvex domain, then for any \( k \in \{0,1, \dots, r \} \), the set \( S=K\cup M \) admits \(\mathscr{C}^k\)-Carleman 
approximation of maps \( f \in \mathscr{C}^k(X,Y) \) which are \( \bar \partial \)-flat to order \( k\) along \(S \). 
\end{thm}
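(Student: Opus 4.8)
The plan is to derive Theorem~\ref{thm2} from Theorem~\ref{main} by replacing the hypothesis that \( K \) is merely \( \mathscr{O}(X) \)-convex with the stronger hypothesis that \( K \) is the closure of a strongly pseudoconvex domain, and then upgrading the conclusion so that the approximating map need only be holomorphic on the interior of \( K \) rather than on a full neighbourhood. The essential point is that for a strongly pseudoconvex domain \( D \) with \( K = \overline{D} \), a map \( f \in \mathscr{C}^k(X,Y) \) which is \( \bar\partial \)-flat to order \( k \) along all of \( S = K \cup M \) is holomorphic on \( D \) but need not extend holomorphically past \( bD \); Theorem~\ref{main} as stated requires holomorphicity on a neighbourhood of \( K \), so I cannot invoke it verbatim. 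The strategy is to run the proof of Theorem~\ref{main} essentially unchanged, but to insert at the start a preliminary approximation step that produces, on a neighbourhood of \( K \), a holomorphic map close to \( f \) in the \( \mathscr{C}^k \)-sense.

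First I would recall how strong pseudoconvexity is used: because \( D \) is strongly pseudoconvex and \( Y \) is Oka, maps holomorphic on \( \overline{D} \) (i.e.\ on a neighbourhood of \( K \)) are \( \mathscr{C}^k(\overline{D}) \)-dense among maps that are holomorphic in the interior and \( \mathscr{C}^k \) up to the boundary. This is the standard fact that on the closure of a strongly pseudoconvex domain one has smooth approximation up to the boundary by maps holomorphic on a neighbourhood, which for target \( Y \) an Oka manifold follows by combining the classical \( \bar\partial \)-estimates on strongly pseudoconvex domains with the Oka principle (absorbing the target into a fibre bundle via a dominating spray, or appealing to the Oka property for sections of the relevant bundle). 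Thus I would first replace the given \( f \), on a neighbourhood of \( K \), by a map \( \tilde f \) which is genuinely holomorphic on an open set \( U \supset K \), agrees with \( f \) to high order along \( M \cap U \) (so that the \( \bar\partial \)-flatness along \( M \) is preserved), and is \( \mathscr{C}^k \)-close to \( f \) on \( K \).

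With \( \tilde f \) in hand, the hypotheses of Theorem~\ref{main} are met: \( \tilde f \) is holomorphic on a neighbourhood \( U \) of the \( \mathscr{O}(X) \)-convex set \( K \), it is \( \bar\partial \)-flat to order \( k \) along the totally real set \( M \) (which has bounded exhaustion hulls and is \( \mathscr{O}(X) \)-convex, with \( K \cup M \) likewise \( \mathscr{O}(X) \)-convex), and \( S = K \cup M \). Applying Theorem~\ref{main} to \( \tilde f \) yields a global holomorphic map \( g \in \mathscr{O}(X,Y) \) that \( \mathscr{C}^k \)-Carleman approximates \( \tilde f \) along \( S \). Combining the two approximations by the triangle inequality — choosing the error in the preliminary step small relative to the prescribed positive error function on \( K \) — gives a \( g \) that \( \mathscr{C}^k \)-Carleman approximates the original \( f \) along \( S \), which is exactly the assertion of Theorem~\ref{thm2}. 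Some care is needed here because Carleman approximation involves a positive error function \( \epsilon \) that may decay along the noncompact set \( M \); however, the preliminary correction is supported near the compact set \( K \), where \( \epsilon \) is bounded below, so the two error budgets do not interfere.

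The main obstacle I anticipate is the boundary approximation step, that is, producing \( \tilde f \) holomorphic on a neighbourhood of \( K = \overline{D} \) while controlling the \( \mathscr{C}^k \)-norm up to \( bD \) \emph{and} simultaneously not destroying the prescribed high-order \( \bar\partial \)-flatness along the part of \( M \) that meets a neighbourhood of \( K \). For scalar functions this is the content of the Henkin–Ram\'irez type estimates on strongly pseudoconvex domains; for maps into an Oka manifold \( Y \) one must pass through a spray/bundle construction, and one must check that the approximation can be arranged to fix the jets along \( M \cap U \) (an interpolation-with-approximation statement), or alternatively glue the locally holomorphic extension near \( M \) with the boundary-approximated map near \( bD \) using a bounded linear \( \bar\partial \)-solution operator and a cutoff. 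Verifying that this gluing respects both the order-\( k \) flatness along \( M \) and the \( \mathscr{C}^k \) control up to \( bD \) is the delicate part; everything after that reduces to a direct application of Theorem~\ref{main}.
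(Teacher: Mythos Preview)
Your approach is correct in outline and arrives at the same place as the paper, but the packaging is different. The paper does not reduce to Theorem~\ref{main} as a black box; instead it simply re-runs the proof of Theorem~\ref{main} with one ingredient replaced. Concretely, in the induction step (Lemma~\ref{induction}) the map \(h'\in\mathscr{O}(N,\mathbb{C}^n)\) approximating \(h\) on \(S\cap L''\) is obtained from Proposition~\ref{compactcase}, which requires \(f\) holomorphic on a neighbourhood of \(K\). When \(K=\overline{D}\) with \(D\) strongly pseudoconvex and \(f\) is only \(\bar\partial\)-flat along \(S\) (hence holomorphic on \(D\) but not beyond), the paper substitutes for Proposition~\ref{compactcase} the Mergelyan-type theorem of Forn\ae ss--Forstneri\v{c}--Wold \cite[Theorem~26]{fornaess2018}, which handles exactly this boundary situation at the scalar level. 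Everything else in the proof of Lemma~\ref{induction} --- the graph embedding \(G\), the retraction \(r\), the \(\bar\partial\)-flat cutoffs \(\chi,\chi'\), the Oka step --- goes through unchanged, and the rest of the induction (Corollary~\ref{inductionstep}, Theorem~\ref{submain}) then proceeds verbatim.

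Your ``preliminary step'' is essentially the first application of this modified Lemma~\ref{induction}: you want a map \(\tilde f\) holomorphic near \(K\), \(\bar\partial\)-flat to order \(k\) along \(M\), and \(\mathscr{C}^k\)-close to \(f\) on \(K\). That is precisely what one pass through the modified induction lemma produces. So the technical work you flag as ``the delicate part'' --- controlling \(\mathscr{C}^k\) up to \(bD\) while preserving \(\bar\partial\)-flatness along \(M\) through the gluing --- is not avoided; it is exactly the content of the substituted step, and the paper handles it by doing the approximation at the level of the \(\mathbb{C}^n\)-valued map \(h\) (where \cite[Theorem~26]{fornaess2018} applies directly) and then pushing back to \(Y\) via \(\Phi\), with the \(\bar\partial\)-flat cutoff \(\chi'\) ensuring flatness along \(M\) survives. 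Your sketch via sprays and \(\bar\partial\)-solution operators would ultimately rebuild this same machinery. The advantage of the paper's route is economy: no separate preliminary lemma is needed, only a one-line substitution inside an existing proof.
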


The notions of holomorphic convexity for noncompact closed sets and of bounded exhaustion hulls are recalled in Subsection \ref{holconvexity}.
The notion of \( \mathscr{C}^k \)-Carleman approximation 
is recalled in Subsection \ref{ck}; see in particular Definition \ref{carleman}.

In Subsection \ref{flat} we define the class of maps which are \( \bar \partial \)-flat to order \( k \) along a closed set \( M \) and we introduce those sets \( M \) which are most important for the purposes of this paper, namely those which are totally real in the sense of Definition \ref{totallyreal}.
Intuitively, the former condition just means that these maps satisfy the Cauchy-Riemann equations to a certain order along \( M \). 
The condition in the theorem that \( f \in \mathscr{C}^k(X,Y) \) is \( \bar \partial \)-flat to order \( k \) along \(S \) implies in particular
that its restriction to the interior of $K$ is holomorphic. Clearly, this condition is necessary for \( \mathscr{C}^k \)-Carleman approximation
by holomorphic maps $X\to Y$ on $K\cup M$.

The proof of Theorem \ref{main}, given in Section \ref{oka},  involves an induction and reduction procedure making use of Poletsky's theorem \cite[Theorem 3.1]{poletsky2013} as well as methods 
from \cite{forstneric2017} and \cite{manne2011}.  Section \ref{c} is dedicated to the case where the target is \( \mathbb{C} \);
there we collect some results that will be needed in the proof of the general case of Theorem \ref{main}.

We finish this introduction with several remarks about the results proven in this paper.

Our first remark is that Theorem \ref{main} may be stated function theoretically as follows.
Let \( \mathscr{C}_S^k = \mathscr{C}_S^k(X,Y) \) denote the space of \( k \) times continuously differentiable maps from \( X \) to \( Y \) endowed with the Whitney topology (as defined in \cite{hirsch1976}).
Given a compact set \( K \) and a totally real set \( M \), let \( \mathcal{H}^k(X,Y; M,K) \) denote the subspace of all maps \( f \in \mathscr{C}_S^k(X,Y) \) which are \( \bar \partial \)-flat to order \( k \) along \( M \) 
and holomorphic on a neighbourhood of \( K \). 
For convenience we write \(  \mathcal{H}^k(X,Y; M,\emptyset)=  \mathcal{H}^k(X,Y; M)\).
Define the equivalence relation \( \sim \) on \( \mathscr{C}^k_S \) by \( f \sim g \) if and only if \( J_x^k f = J_x^k g \) for every \( x \in K \cup M \), where \( J^k_xf \) is the real \( k \)-jet of \( f \) at \( x \). 
The conclusion of  Theorem \ref{main} may then be stated as: 
\begin{align*}\mathscr{O}(X,Y)  \text{ is dense in }   \mathcal{H}^k(X,Y;M,K)/\sim.
\end{align*}

Secondly, in this paper we have considered  approximation of globally defined \( \mathscr{C}^k \)-maps which are  \( \bar \partial \)-flat along \( K \cup M \).
However, if \( M \) is a \( \mathscr{C}^r \)-submanifold rather than just a set,  we can just as easily approximate maps \( f \in \mathscr{C}^k(M, Y) \)  in the Carleman sense without any flatness assumptions on \( f \).
For using the tubular neighourhood theorem we may exend \( f \in \mathscr{C}^k(M,Y) \) to a tubular neighbourhood \( U \) of \( M \). 
We can then \( \bar \partial \)-flatten \( f \) near each \( p \in M \) without changing the values of \( f \) on \( M \) using \cite[Proposition 2.7]{manne2011} and a \( \bar \partial \)-flat partition of unity.
Hence we may assume that 
\begin{align} \label{fonnhd} f \in  \mathcal{H}^k(U,Y; M).
\end{align}
The submanifold \( M \) has a Stein neighbourhood basis by \cite[Corollary 3.2]{manne2011}.
So we can assume that \( U \) is Stein and hence our theorem applies. 
So \( \mathscr{O}(U,Y)|_{M} \) is dense in \( \mathscr{C}^k_S(M,Y) \). 
If in addition, \( f \) has a continuous extension, then by the Whitney approximation theorem we may take \( U = X\) in \eqref{fonnhd}. Hence we have
\begin{align*} \overline{\mathscr{O}(X,Y)|_{M}} = \{ f \in \mathscr{C}_S^k(M,Y): f  \text{ is continuously extendable to all of } X \},
\end{align*}
where the closure is taken inside of \( \mathscr{C}_S^k(M,Y) \).

Our third remark concerns the class of complex manifolds \( Y \) for which our theorem applies.
Clearly  a complex manifold \( Y \)  admitting Carleman approximation of maps in \(  \mathcal{H}^k(X,Y; M, K) \) whenever \( X, M, K \) satisfy the hypotheses of Theorem \ref{main} is necessarily Oka (because  the approximation property is trivially satisfied).
It is natural to ask whether the same is true when \( K = \emptyset \), in other words:\vspace{0.1cm}

\noindent \emph{Is a complex manifold \( Y \) that admits \( \mathscr{C}^k \)-Carleman approximation of maps in \(  \mathcal{H}^k(X,Y;M) \) for all \( X, M \) satisfying the hypotheses of Theorem \ref{main} necessarily Oka?}

Although we do not have a complete answer to this question, we can show as follows that such complex manifolds satisfy the weaker property of being dominable.
Suppose \( Y \) is a complex manifold admitting Carleman approximation of maps in \(  \mathcal{H}^k(X,Y; M) \) whenever \( X, M \) satisfy the hypotheses of Theorem \ref{main}.
Let \( m = \dim Y\) and  \( B_{\mathbb{R}} \) be the real unit ball in \( \mathbb{R}^{2m}\).
Suppose that \( p \in Y \) is an arbitrary point in \( Y \) and  \( \phi: U \rightarrow V \simeq B_{\mathbb{R}}  \) is a (complex) coordinate chart such that \( p \in U \), \( \phi(p) = 0 \) and, under the usual identification of \( \mathbb{C}^m \) with \( \mathbb{R}^{2m} \), \( \phi \) maps \( U \) onto \( B_{\mathbb{R}}\). 
Without changing the values near \( 0 \) we may extend the inverse map \( f \) smoothly to all of \( \mathbb{C}^m\).
Then we may apply Theorem \ref{main} to approximate \( f \) along \( M = \frac{1}{2}B_{\mathbb{R}} \) well enough so that the resulting holomorphic map \( \mathbb{C}^m \rightarrow Y \)  is a submersion at \( 0 \), as required.

Finally, we would like to conclude this introduction by indicating a few simple examples in which 
Theorem \ref{main} may be applied.
\begin{enumerate}[(a)]
\item \( K\subset \mathbb{C} \) is a closed disc centred at \( 0 \) and \( M = \mathbb{R} \subset \mathbb{C} \).
\item  \( K = \emptyset \) and \( M \) is a smooth unbounded curve in \( \mathbb{C}^N \).

\item \( K = \emptyset \) and \( M \) is defined as follows.
Write \( \mathbb{C}^N  = (\mathbb{R}^l \times \mathbb{R}^{N-l}) \oplus \, i \mathbb{R}^N\), \( 1 \leq l \leq N \). 
Let  \( \phi: \mathbb{R}^l \rightarrow \mathbb{R}^{N-l} \) and \( \psi: \mathbb{R}^l \rightarrow \mathbb{R}^l \) 
be \( \mathscr{C}^k \) functions such that for some \( \beta < 1 \), \( \psi \) satisfies the Lipschitz condition 
\begin{align*}
	 \| \psi(x) - \psi(x') \| \leq \beta \| x - x' \| 
\end{align*}
for all \(x, x' \in \mathbb{R}^l\).
Define
\begin{align*} M = \{ z =(x,y) + i w \in \mathbb{C}^N : y = \phi(x), \ w= \psi(x),\  x \in \mathbb{R}^l \}.
\end{align*}
\end{enumerate}

For Carleman approximation of functions, example (b) is due to Stolzenberg \cite{stolzenberg1966}, and Example (c) is due to 
Manne, \O{}vrelid and Wold \cite[Proposition 4.2]{manne2011}.

%
%
\section{Preliminaries}\label{sec:prelim}

Suppose \( U, V \) are open subsets of a complex manifold \( X \).
We say that \( U \) is relatively compact in \( V \) and write \( U \subset \subset V \)  if the closure 
\( \overline U \) is compact and \( \overline U \subset V \).

We adopt the standard convention that a function (or a map) $f$ is holomorphic on a compact set $K$ in a complex manifold $X$
if it is holomorphic in an unspecified open neighbourhood of $K$; the space of all such functions is denoted $\mathscr O(K)$.

\subsection{Holomorphic convexity and bounded exhaustion hulls}  \label{bounded} \label{holconvexity}Let \( X \) be a complex manifold.
For a compact set \( K \subset X\) the \( \mathscr{O}(X) \)-hull of \( K \) is defined by 
\begin{align*}  
	\widehat K_{\mathscr{O}(X)} = \{ z \in X : f(z) \leq \sup\limits_{K} |f| \text{ for every } f \in \mathscr{O}(X) \}.
\end{align*}
We will call a sequence of compact sets \( (M_j)_{j \in \mathbb{N}} \) in a topological space \( M \)  a \emph{normal exhaustion of \( M \)} if \( M = \bigcup_{j =1}^\infty  M_j \) and \( M_j \subset M_{j+1}^\circ \) for \( j \in \mathbb{N} \).

Let \( M \) be a closed, not necessarily compact subset of \( X \).
Following \cite{manne2011} we make the following definitions.
Define the \emph{hull} of \( M \) by
\begin{align*} \widehat{M}_{\mathscr{O}(X)}=\bigcup\limits_{j=1}^\infty\widehat{M_j}_{\mathscr{O}(X)},
\end{align*}
where \( (M_j)_{j \in \mathbb{N}} \) is a normal exhaustion of \( M \). 
The hull is independent of the choice of normal exhaustion.
For if \( (K_j)_{j \in \mathbb{N}} \) and \( (L_k)_{k\in \mathbb{N}} \) are  normal exhaustions of \( M \), 
then we can construct a new normal exhaustion by interweaving, that is 
\begin{align*} K_1 \subset L_{k_1}^\circ \subset L_{k_1} \subset K^\circ_{j_1} \subset \dots,
\end{align*}
where \( (j_l)_{l \in \mathbb{N}} \) and \( (k_{l})_{l \in \mathbb{N}} \) are increasing sequences of natural numbers.
The hull with respect to the new normal exhaustion is clearly the same as the hull with respect to \( (K_j)_{j \in \mathbb{N}} \) and the hull with respect to \( (L_k)_{k \in \mathbb{N}} \), as required.

We say that \( M \) is \( \mathscr{O}(X) \)-convex if \( \widehat{M}_{\mathscr{O}(X)}=M \).
For a compact set $M$ this coincides with the usual notion of $\mathscr{O}(X)$-convexity.

For a closed set \( M \subset X \), let \( h(M) \) denote the set
\begin{align*} h(M) = \overline{\widehat{M}_{\mathscr{O}(X)}\backslash M}.
\end{align*}
Thus, $M$ is \( \mathscr{O}(X) \)-convex if and only if $h(M)=\emptyset$.

Let \( (K_j)_{j \in \mathbb{N}}\) be a normal exhaustion of \( X \). We say that a closed set \( M \subset X \) has \emph{bounded exhaustion hulls} (or \emph{bounded E-hulls}) in \( X \) if the set \( h(K_j \cup M) \) is compact in \( X \) for every \( j \in \mathbb{N} \).
As before, we see that this property is independent of the choice of a normal exhaustion of $X$.

\begin{lemma} \label{lem:Lemma2}   
Let $X$ be a Stein manifold and $M\subset X$ be a closed $\mathscr{O}(X)$-convex subset.
If $K\subset X$ is a compact set contained in the interior of an $\mathscr{O}(X)$-convex set $L\subset X$
such that $h(K\cup M)\subset L$, then setting $K'=K\cup (M\cap L)$ we have that
\begin{equation}\label{eq:hullKM}
	\widehat{K\cup M} = \widehat {K'}\cup M.
\end{equation}
In particular, if $M$ has bounded exhaustion hulls then for every compact set $K\subset X$
the hull $\widehat{K\cup M}$ is closed and $\mathscr{O}(X)$-convex. 
\end{lemma}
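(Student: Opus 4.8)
The plan is to prove the two inclusions in \eqref{eq:hullKM} separately, writing $\widehat{\,\cdot\,}$ for the $\mathscr{O}(X)$-hull and fixing once and for all a normal exhaustion $(M_j)$ of $M$, so that $\widehat{K\cup M}=\bigcup_j\widehat{K\cup M_j}$. The inclusion $\widehat{K'}\cup M\subseteq\widehat{K\cup M}$ is routine: $M\subseteq\widehat{K\cup M}$ by definition, while $K'=K\cup(M\cap L)$ is compact and hence contained in $K\cup M_{j_0}$ for some $j_0$, giving $\widehat{K'}\subseteq\widehat{K\cup M_{j_0}}\subseteq\widehat{K\cup M}$. All the substance lies in the reverse inclusion $\widehat{K\cup M}\subseteq\widehat{K'}\cup M$. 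First I would record the consequence of the hypotheses that localises the problem inside $L$: since $\widehat{K\cup M}\setminus(K\cup M)\subseteq h(K\cup M)\subseteq L$ and $K\subseteq L$, every point of the hull lies in $M\cup L$, and therefore $\widehat{K\cup M_j}\subseteq\widehat{K\cup M}\subseteq M\cup L$ for every $j$. Thus it suffices to fix $z\in\widehat{K\cup M}$ with $z\notin M$, whence $z\in L$, and to prove $z\in\widehat{K'}$. It is worth stressing that the \emph{global} hypothesis $h(K\cup M)\subseteq L$ is indispensable here: for an individual compact set $\widehat{K\cup M_j}$ the analogous containment in $M\cup L$ may fail, and then a hull point in $L$ need not be captured by $K\cup(M\cap L)$.

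The crux is to discard the part of $M_j$ lying outside $L$. Here I would exhaust $L$ from outside by a decreasing sequence of smooth strongly pseudoconvex $\mathscr{O}(X)$-convex compacts $L_n$ with $L\subseteq L_n^\circ$ and $\bigcap_n L_n=L$, and apply Rossi's local maximum modulus principle to the compact set $K\cup M_j$ with the relatively compact open set $V=L_n^\circ$. Choosing $j$ large enough that $M\cap L_n\subseteq M_j$ (possible as $M\cap L_n$ is compact), that principle yields $z\in\widehat{K\cup M_j}\cap L_n^\circ\subseteq\widehat{\big((K\cup M_j)\cap L_n\big)\cup\big(\widehat{K\cup M_j}\cap\partial L_n\big)}$. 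Now $(K\cup M_j)\cap L_n=K\cup(M\cap L_n)$, while the boundary piece $\widehat{K\cup M_j}\cap\partial L_n$ is disjoint from $L$ (since $L\subseteq L_n^\circ$) yet contained in $M\cup L$, so it lies in $M\cap L_n$. Hence the right-hand side collapses to $\widehat{K\cup(M\cap L_n)}$, giving $z\in\widehat{K\cup(M\cap L_n)}$ for every $n$. Since $K\cup(M\cap L_n)$ is a decreasing sequence of compacta with intersection $K'=K\cup(M\cap L)$, and the $\mathscr{O}(X)$-hull commutes with decreasing intersections of compact sets, letting $n\to\infty$ gives $z\in\widehat{K'}$. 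This establishes \eqref{eq:hullKM}.

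For the concluding assertion, if $M$ has bounded exhaustion hulls then $h(K\cup M)$ is compact, so one may choose a compact $\mathscr{O}(X)$-convex $L$ with $K\cup h(K\cup M)\subseteq L^\circ$ (for instance a sublevel set of a strictly plurisubharmonic exhaustion of $X$), and \eqref{eq:hullKM} applies. Closedness of $\widehat{K\cup M}=\widehat{K'}\cup M$ is then immediate, $\widehat{K'}$ being compact and $M$ closed. For $\mathscr{O}(X)$-convexity I would invoke the elementary idempotency $\widehat{\widehat A\cup B}=\widehat{A\cup B}$ for compact sets: with $A=\widehat{K'}$ and $B=M_j$ it gives $\widehat{\widehat{K'}\cup M_j}=\widehat{K'\cup M_j}=\widehat{K\cup M_j}$ once $M\cap L\subseteq M_j$, so that $\widehat{\widehat{K'}\cup M}=\bigcup_j\widehat{\widehat{K'}\cup M_j}=\widehat{K\cup M}=\widehat{K'}\cup M$; thus $\widehat{K\cup M}$ is its own hull.

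The main obstacle is the second paragraph: legitimately forcing the boundary hull points into $M$ and thereby erasing the faraway part $M_j\setminus L$. This is precisely where the hypotheses $h(K\cup M)\subseteq L$ and $K\subseteq L^\circ$ combine with the local maximum modulus principle, and it is also the step where one must be careful to pass from the single compact $M_j$ to the genuinely smaller set $M\cap L$ via the outer exhaustion $L_n\downarrow L$ rather than working with $M_j$ directly.
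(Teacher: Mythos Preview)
Your proof is correct and, like the paper's, hinges on Rossi's local maximum modulus principle, but the two implementations differ. The paper fixes a single compact $\mathscr{O}(X)$-convex piece $M'\subset M$ with $L\cap M\subset M'$, reduces \eqref{eq:hullKM} to the compact identity $\widehat{K\cup M'}=\widehat{K'}\cup M'$, and argues by contradiction: a hypothetical point $p\in\widehat{K\cup M'}\setminus(\widehat{K'}\cup M')$ is shown to lie in $L\setminus\widehat{K'}$, so a function $f\in\mathscr{O}(X)$ with $|f(p)|=1$ and $|f|<1/2$ on $\widehat{K'}$ (obtained via Oka--Weil on $L$) violates Rossi on the relative boundary of that difference set inside $\widehat{K\cup M'}$. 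You instead keep an arbitrary exhaustion piece $M_j$, surround $L$ from outside by compacts $L_n\downarrow L$, invoke Rossi as a direct inclusion to obtain $z\in\widehat{K\cup(M\cap L_n)}$ (the boundary contribution $\widehat{K\cup M_j}\cap\partial L_n$ being forced into $M\cap L_n$ precisely by the containment $\widehat{K\cup M}\subseteq M\cup L$), and finish with the continuity $\bigcap_n\widehat{K\cup(M\cap L_n)}=\widehat{K'}$. The paper's route is shorter and avoids both the outer exhaustion $L_n$ and the limiting step; yours does not require selecting an $\mathscr{O}(X)$-convex $M'$ and makes the role of the hypothesis $h(K\cup M)\subset L$ completely explicit at the boundary $\partial L_n$. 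One trivial slip: in your idempotency argument for the final assertion you write ``with $A=\widehat{K'}$'' where you mean $A=K'$, but the intended computation $\widehat{\widehat{K'}\cup M_j}=\widehat{K'\cup M_j}=\widehat{K\cup M_j}$ (for $j$ large enough that $M\cap L\subset M_j$) is correct.
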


\begin{proof}
Choose a compact $\mathscr{O}(X)$-convex set $M'\subset M$ such that $L\cap M\subset M'$. It suffices to 
show that for every such $M'$ we have that 
\begin{equation}\label{eq:hullKMprime}
	\widehat {K\cup M'}=\widehat {K'}\cup M'.
\end{equation}
Since $M$ is $\mathscr{O}(X)$-convex, this clearly this implies \eqref{eq:hullKM} 
and also shows that the set $\widehat{K\cup M}$ is closed and $\mathscr{O}(X)$-convex.

If \eqref{eq:hullKMprime} fails for some such $M'$, then the set $V:=\widehat {K\cup M'}\setminus \widehat {K'}\cup M'$ is nonempty.
Since $M'$ is $\mathscr{O}(X)$-convex, we have 
that $h(K\cup M')\subset h(K\cup M)\subset L$, where the second inclusion holds 
by the hypotheses of the lemma. This implies $V\subset L\setminus \widehat {K'}$.
Pick a point $p\in V$. There exists a holomorphic function $f\in\mathscr{O}(L)$ such that $|f(p)|=1$ and $|f|<1/2$
on $\widehat {K'}\subset L$. Since $L$ is $\mathscr{O}(X)$-convex, we may approximate $f$ uniformly on $L$ 
by a holomorphic function on $X$ with the same properties which we still denote $f$. 
Note that $V$ is a relative neighbourhood of $p$ in $\widehat {K\cup M'}$ whose 
relative boundary $b_r V := bV\cap \widehat {K\cup M'}$ is contained in $\widehat {K'}$. Since $f(p)=1$ and $|f|<1/2$
on $\widehat {K'}\supset b_rV$, we have a contradiction to Rossi's local maximum modulus principle \cite{Rossi1960}.
(A simple proof of Rossi's theorem was given by Rosay  \cite{rosay2006}.)
\end{proof}

\begin{lemma}\label{lem:Lemma3} 
If $X$ is a Stein manifold and $M\subset X$ is a closed $\mathscr{O}(X)$-convex subset with bounded 
exhaustion hulls, then there is a normal exhaustion $(K_j)_{j\in\mathbb N}$ of $X$ by compact $\mathscr{O}(X)$-convex 
sets such that $K_j\cup M$ is $\mathscr{O}(X)$-convex  for every $j\in\mathbb N$.
\end{lemma}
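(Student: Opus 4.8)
The plan is to read the exhaustion almost directly off Lemma~\ref{lem:Lemma2}, and then to repair the nesting conditions of a normal exhaustion by passing to a subsequence. First, since $X$ is Stein, I would fix a smooth strictly plurisubharmonic exhaustion function $\rho\colon X\to\mathbb{R}$ and take $(L_j)_{j\in\mathbb{N}}$ to be a normal exhaustion of $X$ by compact $\mathscr{O}(X)$-convex sets, for example the sublevel sets $L_j=\{\rho\le c_j\}$ at regular values $c_j\nearrow\infty$.

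Next, for each fixed $j$ I would produce a compact $\mathscr{O}(X)$-convex set whose union with $M$ is $\mathscr{O}(X)$-convex. Since $M$ has bounded exhaustion hulls, $h(L_j\cup M)$ is compact, so I can pick an index $m(j)>j$ with $h(L_j\cup M)\subset L_{m(j)}$ and set $L'_j=L_{m(j)}$; then $L_j\subset (L'_j)^\circ$ and $L'_j$ is $\mathscr{O}(X)$-convex. With $K'_j=L_j\cup(M\cap L'_j)$ (compact, as $M$ is closed and $L'_j$ compact) and $K_j=\widehat{K'_j}$, Lemma~\ref{lem:Lemma2} applies and gives $\widehat{L_j\cup M}=K_j\cup M$. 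The final clause of that lemma, which is exactly where the bounded-exhaustion-hulls hypothesis enters, shows that $\widehat{L_j\cup M}$ is $\mathscr{O}(X)$-convex. Hence $K_j$ is compact and $\mathscr{O}(X)$-convex (being the hull of a compact set in a Stein manifold), and $K_j\cup M=\widehat{L_j\cup M}$ is $\mathscr{O}(X)$-convex, which is precisely the property demanded of each term of the exhaustion.

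Finally I would arrange the axioms of a normal exhaustion. Since $L_j\subset K_j$ we already have $\bigcup_j K_j=X$, but the interior-nesting condition $K_j\subset K_{j+1}^\circ$ may fail because the sets $K_j$ are built one at a time with no built-in monotonicity. To fix this I use that each $K_j$ is compact, hence contained in $L_n^\circ$ for some $n$; thus I can inductively choose indices $j_1<j_2<\cdots$ with $K_{j_l}\subset L_{j_{l+1}}^\circ$. As $L_{j_{l+1}}\subset K'_{j_{l+1}}\subset K_{j_{l+1}}$ forces $L_{j_{l+1}}^\circ\subset K_{j_{l+1}}^\circ$, the subsequence $(K_{j_l})_l$ satisfies $K_{j_l}\subset K_{j_{l+1}}^\circ$ and still exhausts $X$; after relabeling it is the desired normal exhaustion by compact $\mathscr{O}(X)$-convex sets whose unions with $M$ are $\mathscr{O}(X)$-convex.

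I expect the only genuine subtlety to be this last step, namely securing the interior-nesting property, since the hulls $K_j$ are produced individually and carry no a priori monotonicity; the subsequence argument handles it using nothing more than compactness of each $K_j$ and the inclusion $L_j\subset K_j$. The remainder is straightforward bookkeeping around Lemma~\ref{lem:Lemma2}.
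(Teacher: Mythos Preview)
Your proof is correct and follows essentially the same route as the paper. The paper organizes the argument as an explicit induction---at each step choosing an index $j_{l+1}$ so that the previously built $K'_l$ sits inside the next starting set $K_{j_{l+1}}$---whereas you first build all the $K_j$ at once and then extract a subsequence to get the interior-nesting; these are equivalent bookkeeping choices around the same application of Lemma~\ref{lem:Lemma2}.
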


\begin{proof}
Choose any normal exhaustion $(K_j)_{j \in \mathbb{N}}$ of $X$. We proceed inductively. In the first step, choose a compact 
$\mathscr{O}(X)$-convex set $L_1$ such that $h(K_1\cup M) \subset L_1$, and let 
$K'_1$ denote the $\mathscr{O}(X)$-convex hull of $K_1\cup (L_1\cap M)$.
By Lemma \ref{lem:Lemma2} the set $K'_1\cup M$ is then closed and $\mathscr{O}(X)$-convex.
Next, choose $j_2>1$ such that $K'_1\subset K_{j_2}$. Pick 
a compact $\mathscr{O}(X)$-convex set $L_2\supset K_{j_2}$ such that 
$h(K_{j_2}\cup M) \subset L_2$, and let $K'_2$ denote the $\mathscr{O}(X)$-convex hull of 
$K_{j_2}\cup (L_2\cap M)$. As before we have that $K'_2 \cup M$ is closed and $\mathscr{O}(X)$-convex.
Clearly this process continues inductively and gives an exhaustion $(K'_j)_{j \in \mathbb{N}}$ of $X$ 
with the desired properties.
\end{proof}

%
%
\subsection{\(\mathscr{C}^k\)-Carleman approximation} \label{carlemanapprox} \label{ck}
Let \( X, Y \) be complex manifolds of complex dimensions \( m, n \) respectively and let \( A \) be a closed subset of \( X \).
Let \( f \) be a \(k \)-times continuously differentiable \( Y \)-valued map on a neighbourhood of \( A \). 
A \emph{Carleman pair for \( f \) with respect to \( A\)} is a pair \( P=(\mathscr{A}, \mathscr{B}) \), where \( \mathscr{A} = \{ (U_j,\phi_j) : j \in J \} \)   
and \( \mathscr{B} = \{ (V_j,\psi_j): j \in J \} \)  are collections of complex charts  on \( X \) and  \( Y \) respectively such that \( (U_j)_{j \in J} \) 
is a locally finite cover of \( U := \bigcup\limits_{j \in J} U_j \supset A \)  and for each \( j \in J \) the  following statements hold.
\begin{enumerate}[(i)]
\item There is a complex chart \( (\tilde U_j, \tilde \phi_j) \) on \( X \) such that \( U_j \subset \subset  \tilde U_j \) and \( \tilde \phi_j|_{U_j}=\phi_j \).
\item  \label{handy}There is a complex chart \( (\tilde V_j, \tilde \psi_j) \) on \( Y \)  such that \( V_j \subset \subset  \tilde V_j \) 
and \( \tilde \psi_j|_{U_j}=\psi_j \).
\item \label{relcomp} The image \( f(U_j \cap A) \) is relatively compact in \( V_j \).\end{enumerate}

The reason for the name is that these collections of charts help us to define Carleman approximation for general targets. 
To simplify notation we will often write a Carleman pair as \( P=((\phi_j, \psi_j))_{j \in J}\). Property (2) will  help show 
convergence in Theorem \ref{main}. 

Suppose \( P = ((\phi_j,\psi_j))_{j \in J}\) is a Carleman pair for some map \( f \) with respect to \( A \). 
If \( g, h \) are continuous \( Y \)-valued maps  on a neighbourhood of \( A \) such that 
\begin{align}\label{close1} g(U_j \cap A) \cup h(U_j \cap A) \subset\subset V_j
\end{align} 
for each \( j \in J \), then we say that \( g \) is \emph{\( P \)-close} to  \( h \) on \( A \). 
Hence both \( g \) and \( h \) map a small neighbourhood \( U'_j \) of \( U_j \cap A \) into  \( V_j \) for each \( j \in J \).

Let \( j \in J \) and \( g,h  \in \mathscr{C}^k(N,Y) \) be maps from an open neighbourhood 
\(  N \) of \( \overline{U_j} \cap A \)  to \( Y \) satisfying (\ref{close1}). We define
\begin{align*}
	e_j^{A,P}(g,h):= \begin{cases} \max\limits_{\substack{ |I| \leq k}} 
	\left\|  \dfrac{\partial^{|I|}(\psi_j \circ g)}{\partial \phi_j^I} - \dfrac{\partial^{|I|}(\psi_j \circ h)}{\partial \phi_j^I}\right\|_{A \cap U_j} 
	& \text{ if } U_j \cap A \not = \emptyset,  \\ 0 & \text{ if } U_j \cap A = \emptyset,
\end{cases}
\end{align*}
where \( \phi_j \) is treated as a real chart from \( U_j \) to \( \mathbb{R}^{2m} \) and \( | I| := I_1 + \dots +I_{2m}\)  for all multiindices \( I \in  \mathbb{N}^{2m} \).

\begin{defn}[\(\mathscr{C}^k\)-Carleman approximation] \label{carlemanapproximation} \label{carleman}
Let \( A \) be a closed subset of \( X \). 
We say that \( A \) admits {\( \mathscr{C}^k \)-Carleman approximation  of maps in 
a family \( \mathcal{F}\subset \mathscr{C}^k(X,Y) \)}  if for each \( f \in \mathcal{F} \) the following statement holds. 
For every Carleman pair \(P= ((\phi_j,\psi_j ))_{j \in J}\) of \( f \) with respect to \( A \) and  every family of strictly positive numbers 
\( (\epsilon_j)_{j \in J} \), there exists an entire map \( g \in \mathscr{O}(X,Y) \) such that \( g \) is \( P \)-close to \( f \) on \( A \) 
and \(e^{A,P}_j(f,g) < \epsilon_j \) for \( j \in J \).
\end{defn}

This definition is reminiscent of how the fine Whitney topology is defined on the mapping space
\( \mathscr{C}^k(M,N) \) (see \cite{hirsch1976}).

In the case where \( Y = \mathbb{C}^n \) we use Carleman pairs of the form \( ((\phi_j, \text{id}_{\mathbb{C}^n}))_{j \in J} \).
Whilst these are strictly speaking not Carleman pairs as they do not satisfy condition \eqref{handy}, the distinction is unimportant.
For once \( f \) and \( (\epsilon_j)_{j \in J} \) are specified, there are \( V_j \subset\subset \mathbb{C}^n \) such that showing 
Carleman approximation with respect to \( ((\phi_j, \text{id}_{\mathbb{C}^n}))_{j \in J} \) is the same as showing Carleman 
approximation with respect to \( ((\phi_j, \text{id}_{V_j}))_{j \in J} \) and the latter is an honest Carleman pair.

It is an elementary, albeit tedious, exercise to prove that the above definition of Carleman approximation is equivalent 
to the same statement with `every Carleman pair of \( f \)' replaced by `some Carleman pair of \( f \)'. The idea of proof is as follows.
Suppose we have proven that the statement of Definition \ref{carlemanapproximation} holds for some 
Carleman pair \( P = ((\phi_j, \psi_j))_{j \in J} \) of \( f \). 
Let \( P' = ((\phi_j', \psi_j'))_{j \in J'} \) be another Carleman pair for \( f \).
By the chain rule, derivatives of order up to \( k \) in one set of coordinates can be expressed as polynomials in derivatives 
of order up to \( k \) in another set of coordinates, with coefficients involving derivatives of the components of the transition map.
(Explicit formulas can be found in the literature for the multivariate chain rule for higher derivatives, for example see 
Constantine and Savits' paper \cite{constantine1996}.) Each \( A \cap U_j' \) is covered with finitely many \( U_k \).
Therefore, since we can approximate as well as we like on each \( U_k \) we can approximate as well as we like on \( U_j' \).

%
%
\subsection{\( \bar \partial \)-flat maps} \label{flat}
In this section we introduce the natural class of maps  that one wants to consider when studying
Carleman approximation by holomorphic maps.

Let \( X \) be a complex manifold of dimension \( m \) and let \( M \) be a closed subset of $X$.
Assume that $k\in\mathbb N$ and $f$ is a function of class \( \mathscr{C}^k \) in an open neighbourhood $U\subset X$ of $M$.
We say that $f$ is {\em \( \bar \partial  \)-flat to order \( k \) along \( M \)} if for each point \( x \in M \) there exist holomorphic 
coordinates \( z = (z_1, \dots, z_m) \) on $X$ around \( x \) such that in these coordinates 
\begin{equation}\label{eq:dibarflat}
	  D^\alpha (\bar \partial f)(x)= 0
\end{equation}
holds for all multiindices \( \alpha \in (\mathbb{N} \cup \{ 0 \})^{2m} \) with \( |\alpha| := \alpha_1 + \dots + \alpha_n \leq k -1 \).
Here, $D^\alpha$ denote the partial derivative with respect to the underlying real coordinates 
$x_1,y_1,\ldots, x_m,y_m$ with $z_j=x_j+i y_j$. If this holds, we shall write \( f \in \mathcal{H}_k{(X,\mathbb{C};M)} \),
where $\C$ denotes the target space (that is, we are considering functions).
It follows from chain rule that the definition is independent of the choice of local holomorphic coordinates.

If the set $M$ is sufficiently regular (for example, if it is a submanifold of $X$ of class $\mathscr C^1$) 
then the condition \eqref{eq:dibarflat} for all $|\alpha|\le k-1$ and $x\in M$ is equivalent to 
\[
	|\bar \partial f(z)| \le o(\mathrm{dist}(z,M)^{k-1}), 
\]
where $\mathrm{dist}$ is any Riemannian distance function on $X$ and the estimate is 
uniform on any compact subset of $M$.

More generally, let \( X, Y \) be complex manifolds of dimension \( m, n \) respectively and let \( M \) be a closed subset of 
\( X \). We say that a map \( f \in \mathscr{C}^k(X,Y) \) is \( \bar \partial  \)-flat to order \( k \) 
along \( M \) and we write \( f \in \mathcal{H}_k{(X,Y;M)} \) if for each point \( x \in M \) there exist an open neighbourhood 
$U\subset X$ of $x$ and holomorphic coordinates \( \zeta = (\zeta_1, \dots, \zeta_n) \) on an open neighbourhood $V\subset Y$ 
of \( f(x) \) such that $f(U)\subset V$ and each component function of the map $\zeta\circ f|U:U\to \mathbb C^n$ 
is \( \bar \partial  \)-flat to order \( k \) on $M\cap U$.
We declare that \( f \in \mathcal{H}_0(X,Y;M) \) for any continuous map \( f \in \mathscr{C}(X,Y) \).

Note that for a map \( f \) to admit \( \mathscr{C}^k \)-Carleman approximation  along a closed set \( M \), \( k \geq 1 \), 
 \( f \) must be \( \bar \partial  \)-flat up to order \( k  \) along \( M \).

The above definitions pertain to any closed subset $M$ of $X$. However, the most important case for the
purposes of this paper is when $M$ is a totally real submanifold of $X$ or, more generally, a totally real subset.

\begin{defn} \label{totallyreal}
A real submanifold \( M \) 
in a complex manifold \( X \) is called  {\em totally real} if 
the tangent space $T_x M$ at every point $x\in M$ (which is a real subspace of the complex 
tangent space $T_x X$) does not contain any nontrivial complex subspaces. 
A subset \( M \subset X \) is called a \emph{totally real set of class \( \mathscr{C}^k \)}, \( k \geq 1 \), if \( M \) is closed and locally 
contained in a totally real submanifold of class \( \mathscr{C}^k \),
that is, for every point \( x \in M \) there exist an open neighbourhood \( U \) of \( x \) and a 
closed \( \mathscr{C}^k \)  totally real submanifold \( M' \subset U \) such that \( M \cap U \subset M' \).
\end{defn}
These notions also apply to $\mathscr C^\infty$ and real analytic submanifolds.

It is well known that every function $f\in \mathscr C^k(M)$  on a  $\mathscr{C}^k$ totally real submanifold $M$ in a complex manifold $X$ extends to a function in $\mathscr{C}^k(X)$ which is smooth of class $\mathscr C^\infty$ on $X\backslash M$ and 
\( \bar \partial  \)-flat up to order \( k  \) along \( M \) (see for example  Hormander and Wermer \cite[Lemma 4.3]{hormander1968}).

Suppose \( K \) is a compact subset of \( X \). Let \(  \mathcal{H}_k(X,Y;M,K) \) denote the set of all  \(  f \in \mathcal{H}_k(X,Y;M) \) 
such that \( f| U \in \mathscr{O}(U,Y) \) for some neighbourhood \( U \) of \( K \). Note that 
\begin{align*} 
	\mathcal{H}_k(X,Y;M,\emptyset)=\mathcal{H}_k(X,Y;M). 
\end{align*} 
This is our motivation for choosing this notation.

\section{Target $ \mathbb{C} $}
\label{c}

In this section we specialise to the target \( \mathbb{C} \) case.
Here we collect several results that are needed in the proof of our main theorem.

When the set on which we are approximating on is compact, Carleman approximation coincides with Mergelyan approximation. 
Thus Propositions \ref{mergelyan}  is really  a result about Mergelyan approximation.
However, we introduce this theorem as follows because it is  easily stated with the terminology we have  introduced.

\begin{prop}\label{supportaway} \label{local} \label{mergelyan} \label{compactcase}
Let \( X \) be a Stein manifold, \( K \subset X \) be a compact \( \mathscr{O}(X) \)-convex set, and \( M \subset X \) be a compact, 
totally real set of class \( \mathscr{C}^k \) such that \( S:=K \cup M \) is \( \mathscr{O}(X)\)-convex. 
Given  \( f \in\mathcal{H}_k(X,\mathbb{C};K,M)\), a Carleman pair \( P = ((\phi_j,\psi_j))_{j \in \mathbb{N}}\) for \( f\) with respect to \(  K \cup M \) 
and  a sequence of positive real numbers \( (\epsilon_j)_{j \in \mathbb{N}} \), there is a function
\(
  g \in \mathscr{O}(X) 
\) such that  \( e_{j}(f,g) < \epsilon_j \) for \( j \in \mathbb{N} \). \vspace{0.1cm} \\
Moreover, given a closed analytic subvariety \( A \subset K \cup (X\backslash M) \) and a finite subset \(B \subset M \backslash K \)  such that \( f \) is holomorphic on (a neighbourhood of) \(  A\), 
we may choose \( g \) such that:
\begin{enumerate}[(i)]
\item \( f \) agrees with \( g \) (up to  some  finite order \( d \) of our choice) along \( A \),\label{cond1}
 \item \( f \) agrees with \( g \) up to order \( k \) at each \( p \in B \). \label{cond2}
\end{enumerate}
\end{prop}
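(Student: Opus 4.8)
The plan is to prove the proposition in three stages: reduce the Carleman statement to a single $\mathscr{C}^k$-approximation estimate on the compact set $S$, carry out the core $\bar\partial$-flat Mergelyan approximation, and finally impose the two interpolation conditions by a controlled correction.

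\emph{Reduction.} Because $M$ is compact we may take the index set of the Carleman pair finite, so the family of estimates $e_j(f,g)<\epsilon_j$ amounts to the single requirement that $g$ be $\mathscr{C}^k$-close to $f$ on $S$, measured in the finitely many charts $\phi_j$ covering $S$. Since $S$ is compact and $\mathscr{O}(X)$-convex in the Stein manifold $X$, it possesses a basis of strongly pseudoconvex Stein neighbourhoods; I would fix one such neighbourhood $\Omega\supset S$ on which $f$ is $\mathscr{C}^k$, holomorphic near $K$, and $\bar\partial$-flat to order $k$ along $M$. It then suffices to produce $g\in\mathscr{O}(X)$ that is $\mathscr{C}^k$-close to $f$ on $S$.

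\emph{Core approximation.} The first goal is a function $h$, holomorphic in a neighbourhood of $S$, that is $\mathscr{C}^k$-close to $f$ on $S$. Near $K$ one takes $h=f$, already holomorphic. Near $M$ I would cover $M$ by finitely many charts in which, after a holomorphic change of coordinates, $M$ is contained in the totally real plane $\mathbb{R}^m\subset\mathbb{C}^m$; convolving $f$ with a Gaussian kernel integrated over $\mathbb{R}^m$ produces entire approximants, and the hypothesis that $f$ is $\bar\partial$-flat to order $k$ is exactly what forces the approximation error to tend to $0$ in the $\mathscr{C}^k$-norm along $M$ (the Hörmander--Wermer / Range--Siu type estimate; compare \cite{hormander1968,manne2011}). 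On the overlaps of these local holomorphic approximants, and between them and $f$ near $K$, the differences are holomorphic and small; I would patch them into a single $h\in\mathscr{O}(\Omega')$ on a smaller strongly pseudoconvex $\Omega'\supset S$ by solving a $\bar\partial$-equation with small right-hand side, using a bounded solution operator whose $\mathscr{C}^k$-bounds keep $h-f$ small on $S$. Finally, since $h$ is holomorphic on a neighbourhood of the $\mathscr{O}(X)$-convex set $S$, the Oka--Weil theorem gives $g\in\mathscr{O}(X)$ approximating $h$ uniformly on a slightly larger $\mathscr{O}(X)$-convex compact neighbourhood of $S$; uniform closeness of holomorphic functions there upgrades to $\mathscr{C}^k$-closeness on $S$ by the Cauchy estimates. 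Composing the two steps proves the first assertion.

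\emph{Interpolation.} For the moreover part I would first reduce to a vanishing problem. The $\bar\partial$-flatness of $f$ to order $k$ along $M$ forces the real $k$-jet of $f$ at each $p\in B\subset M\setminus K$ to be the jet of a holomorphic polynomial, while along $A$ the function $f$ is genuinely holomorphic; the hypothesis $A\subset K\cup(X\setminus M)$, i.e. $A\cap M\subset K$, guarantees that these two sets of data do not conflict. Hence the $d$-jet of $f$ along $A$ together with its $k$-jets at $B$ defines a holomorphic section of the corresponding jet bundle, and since $X$ is Stein, Cartan's Theorem B produces $F_0\in\mathscr{O}(X)$ realising exactly these jets. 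Replacing $f$ by $f-F_0$, I reduce to approximating a $\bar\partial$-flat function that vanishes to order $d$ along $A$ and order $k$ at $B$. I would then rerun the core approximation inside the coherent ideal sheaf $\mathcal{I}$ of functions vanishing to these orders: both the local Gaussian approximants and the patching $\bar\partial$-solution can be chosen in $\mathcal{I}$ (the $\bar\partial$-data vanishes to the required order on $A\cup B$, so Cartan's Theorem B solves the $\bar\partial$-equation with solution in $\mathcal{I}$), and the final Oka--Weil step admits jet interpolation along $A$ and $B$. Adding back $F_0$ yields $g\in\mathscr{O}(X)$ satisfying (i) and (ii) while remaining $\mathscr{C}^k$-close to $f$ on $S$.

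\emph{Main obstacle.} The hard part will be the $\mathscr{C}^k$-norm control in the core step: obtaining the Gaussian-convolution error estimates in all derivatives up to order $k$, and a $\bar\partial$-solution operator whose $\mathscr{C}^k$-bounds are governed by the order of flatness of $\bar\partial f$ along $M$. This is precisely where the $\bar\partial$-flatness hypothesis is indispensable; by contrast the interpolation and the globalization via Oka--Weil are comparatively routine applications of Cartan--Oka--Weil theory.
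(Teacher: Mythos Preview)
Your plan is sound and lands on the same circle of ideas as the paper, but the organization and the handling of interpolation differ enough to be worth flagging.

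\textbf{Core approximation.} You reconstruct the local $\mathscr{C}^k$-approximation along $M$ from scratch (Gaussian convolution over $\mathbb{R}^m$, then $\bar\partial$-patching). The paper instead reduces to the special case $\operatorname{Supp}(f)\cap K=\emptyset$ by a $\bar\partial$-flat cutoff $\chi$: it approximates $f$ on a neighbourhood of $K$ by $\tilde f\in\mathscr{O}(X)$ via Oka--Weil, then applies the known result of Manne--\O vrelid--Wold \cite[Prop.\ 3.13]{manne2011} to $(1-\chi)(f-\tilde f)$, and sets $g=\tilde f+h$. Your approach is more self-contained; the paper's is more modular and avoids re-proving the $\bar\partial$-flat Mergelyan theorem on $M$. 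Both are valid.

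\textbf{Interpolation.} Here you diverge more substantially, and your sketch has a gap. You subtract a global $F_0\in\mathscr{O}(X)$ carrying the prescribed jets and then claim you can ``rerun the core approximation inside the ideal sheaf $\mathcal{I}$''. But the Gaussian convolution along $M$ does \emph{not} preserve membership in $\mathcal{I}$ at points of $B\subset M\setminus K$: convolving a function that vanishes to order $k$ at $p$ typically destroys that vanishing. You would need an extra correction step (subtract the small residual $k$-jet at each $p\in B$), and then you need the bounded $\bar\partial$-solution to stay in $\mathcal{I}$ \emph{with smallness}, which is more than bare Cartan~B gives. These are fixable, but you do not address them. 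The paper sidesteps this entirely: it writes $f=\phi+\sum_\nu h_\nu f_\nu$ with $\phi\in\mathscr{O}(X)$ carrying the $d$-jet along $A$ and $h_\nu\in\mathscr{O}(X)$ generating the ideal of $A$ to order $d$; approximating each $f_\nu$ (with the order-$k$ interpolation at $B$ built into the Manne--\O vrelid--Wold step) then automatically preserves agreement along $A$ because the $h_\nu$ factors absorb the error. This multiplicative decomposition is cleaner than working inside $\mathcal{I}$ throughout.

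In short: your core argument is correct and parallel to the paper's; your interpolation scheme is morally right but needs the additional jet-correction steps spelled out, whereas the paper's generator decomposition $f=\phi+\sum h_\nu f_\nu$ handles condition~(i) for free.
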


\begin{proof} 

We first show that there exist  an approximating map \( g \in \mathscr{O}(X) \) satisfying \( \eqref{cond2} \). For \( \text{{Supp}}(f) \cap K = \emptyset \) the proof  is the same as that of Manne, \O{v}relid and Wold \cite[Proposition 3.13]{manne2011} except that one may need to apply Oka-Weil theorem to a slightly larger \( \mathscr{O}(X) \)-convex set so as to allow for the derivatives of \( f \in \mathcal{H}_k(X,\mathbb{C};K,M) \) to be approximated on all of \( K \).

The case where \( \text{{Supp}}(f) \cap K \not = \emptyset \) is then easily reduced to the above case as follows.
Take an arbitrary  \( f\in H_k(X,\mathbb{C};M,K) \) with \( \text{{Supp}}(f) \cap K \not = \emptyset \). 
Suppose that \( P=((\phi_j,\text{id}_{\mathbb{C}}))_{j=1}^n \) is a Carleman pair for \( f \) and
 \( \epsilon_1, \dots, \epsilon_n > 0 \).

Let \( U \) be a neighbourhood of \( K \) such that \( f|_{U} \) is holomorphic.
Since \( K \) is \( \mathscr{O}(X) \)-convex and \( X \) is Stein, 
there are compact \( \mathscr{O}(X) \)-convex subsets \( K', K''  \) such that 
\begin{align*} K \subset (K')^\circ \subset K'\subset (K'')^\circ \subset K'' \subset U.
\end{align*}
We may assume that \( K'' \backslash K\) does not intersect \(B \).
Let \( V \subset \subset K'\) be an open neighbourhood of \( K \).
Let \( \chi  \in \mathscr{C}_0^\infty(X) \) be a smooth cut-off function such that \( \chi \equiv 1 \) on \( V \) and 
\( \text{Supp} \chi \subset (K')^\circ\).
We may assume that \( \chi \) is \( \bar \partial \)-flat up to order \( k \) along \( M \) by \cite[Proposition 2.7]{manne2011}.
Let \( C >1 \) be  such that \( e_j^{S, P}(\chi \theta,0) \leq C e_j^{S, P}(\theta, 0) \) for every 
\( \theta \in \mathscr{C}^k(X) \) \( P \)-close to \( f \) on \( S \).

Since the set $K''\subset U$ is \( \mathscr{O}(X) \)-convex and \( S \cap K' \subset (K'')^{\circ} \), 
the Oka-Weil theorem 
yields a function 
\( \tilde{f} \in \mathscr{O}(X) \) such that
\begin{align*}
 	e_j^{S \cap K', P}(f,\tilde f) < \dfrac{\epsilon_j }{2C}, \quad \text{ for } j =1, \dots, n.
\end{align*}
By the special case considered above, there exists  \( h \in \mathscr{O}(X) \) that agrees with \[ (1- \chi) \cdot (f - \tilde f) \] to order \( k \) at each \( p \in B \)  and such that 
\begin{align*} e_j^{S,P}(h,(1-\chi)\cdot(f-\tilde f))< {\epsilon_j}/{2C}
\end{align*}
for  \( j =1 , \dots, n \).  Let \( g = \tilde{f}+ h\). Observe that \( g \) agrees with \( f \) up to order \( k \) at each \( p \in B \). Moreover,
\begin{align*} e_j^{S, P}(g,f) &= e_j^{S, P}(\tilde f + h, f) \\
&= e_j^{S, P}(h, f - \tilde f)\\
&\leq e_j^{S, P}(h, (1 - \chi)(f- \tilde f)) + e_j^{S,P}((1-\chi)(f-\tilde f), f -\tilde f)\\
&= e_j^{S, P}(h, (1 - \chi)(f- \tilde f)) + e_j^{S,P}(\chi(f-\tilde f), 0)\\
&<\epsilon_j/2 + (\epsilon_j/2 C) \cdot C = \epsilon_j
\end{align*}
for \( j =1, \dots, n \), as required.

We may build (jet) interpolation along \( A \) into our construction precisely as in \cite[p. 89]{forstneric2017}.
That is, using Cartan's theorems  A and  B  we  write
\begin{align*} f = \phi + \sum_{\nu} h_\nu f_\nu \quad \text{ on } U
\end{align*}
where \( \phi \) is a holomorphic function on \( X \) that agrees with \( f \) up to order \( d \) along \( A \),
\( h_v \) is a finite collection of holomorphic functions on \(X \) that vanish up to order \( d \) along \( A \) and whose common zero set is \( A \) and \( f_\nu \) is a finite collection of functions in \(H_k(X,\mathbb{C};M,K)\).
Moreover, using  \( \bar \partial \)-flat partitions of unity we can choose  \( f_\nu \) so that \( f \) agrees with \(  \phi + \sum_{\nu} h_\nu  f_\nu \) up to order \( k \) along \( M \).
Then approximating each \( f_\nu \) with interpolation along \( B \) as above, we obtain
\begin{align*}\tilde  f = \phi + \sum_{\nu} h_\nu \tilde f_\nu,
\end{align*}
which clearly has the desired properties.
\end{proof}

%
%
\section{Oka target} \label{oka}

In this section we prove Theorem \ref{main}. Note that it suffices to consider the case where \( r =k \) by the Whitney approximation theorem.

The method of proof is as follows.
First we will prove Lemma \ref{induction} which is a Mergelyan type theorem with jet approximation and interpolation. 
Lemma \ref{induction}, or rather an easy consequence of it, Corollary \ref{inductionstep}, is then used to recursively construct a sequence of 
maps \( (f_l)_{l \in \mathbb{N}} \) whose limit approximates \( f \)  uniformly on \( K \) and  in the Carleman sense along \( M \), 
where \( f \in \mathcal{H}_k(X,Y; M, K) \) (see the proof of Theorem \ref{submain}).
Finally, given \( f \in \mathcal{H}_k(X,Y; M, K) \) we may apply Lemma \ref{induction} to \( f \) and then use 
Theorem \ref{submain} to get a map approximating \(f \) well in the Carleman sense along all of  \( K \cup M \),
as required. 

Hence Theorem \ref{main}  follows once we have proven  Lemma \ref{induction}, Corollary \ref{inductionstep} and Theorem \ref{submain}.

\noindent \emph{Remark.} If in addition \( K \) is the closure of a strongly pseudoconvex domain, then \( S \) admits \(\mathscr{C}^k\)-Carleman 
approximation of maps \( f \in \mathscr{C}^k(X,Y) \) which are \( \bar \partial \)-flat to order \( k \) along \(S \). 
For we may repeat the above arguments using \cite[Theorem 26]{fornaess2018}, or rather its proof, instead of Proposition \ref{compactcase} 
to find the map  \( h' \) in our proof of Lemma \ref{induction}.\vspace{0.1cm}

\begin{lemma} \label{fhat} \label{induction} Let \( X, Y, K, M \) be as in Theorem \ref{main}. Let \( L \supset L^\circ \supset K \) be 
an \( \mathscr{O}(X) \)-convex set. Given a  map \( f \in \mathcal{H}_k(X,Y;K,M)\), a Carleman pair \( P = ((\phi_j,\psi_j))_{j \in \mathbb{N}}\) 
for \( f\) with respect to \( S=K \cup M \) and  a sequence of positive real numbers \( (\epsilon_j)_{j \in \mathbb{N}} \), there is a map
\begin{align*}
 g \in \mathcal{H}_k(X,Y;L,M) 
\end{align*}
such that \( e_{j}^{S, P}(f,g) < \epsilon_j \) for \( j \in \mathbb{N} \). 
\end{lemma}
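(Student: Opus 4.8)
The plan is to reduce the Oka-valued problem to the scalar case treated in Proposition \ref{mergelyan} by means of a dominating spray, and then to globalize the resulting map by a $\bar\partial$-flat cut-off. Fix a compact $\mathscr O(X)$-convex set $L'$ with $L\subset (L')^\circ$. Since $Y$ is Oka, over an open neighbourhood $W$ of the compact set $f(L')$ we may work with a dominating (composite) holomorphic spray $s\colon E\to Y$, where $E\to Y$ is a holomorphic vector bundle, $s(0_y)=y$, and the fibrewise differential is surjective onto $TY$ along the zero section (see \cite{forstneric2017}). Shrinking the target charts $(V_j,\psi_j)$ of the Carleman pair, we trivialize $E$ over each $V_j$; the domination property then lets us represent maps near $f$ as $s$ applied to a $\mathbb C^n$-valued correction, which is the mechanism by which the Oka-target approximation is reduced, locally and step by step, to scalar approximation of the kind delivered by Proposition \ref{mergelyan}.

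\emph{Compact approximation step.} By Lemma \ref{lem:Lemma2}, and because $M$ has bounded exhaustion hulls, the hull $Q:=\widehat{L\cup(M\cap L')}$ is compact and $\mathscr O(X)$-convex; moreover $L\cup(M\cap L')$ has the structure required by Proposition \ref{mergelyan}, namely a compact $\mathscr O(X)$-convex set $L$ together with a compact totally real set $M\cap L'$. I would first produce a map $\tilde f$, holomorphic on a neighbourhood of $L$, which is $\mathscr C^k$-close to $f$ on $L\cup(M\cap L')$ and $\bar\partial$-flat to order $k$ along $M\cap L'$. Holomorphic approximation near the convex part $L$ of a map that is holomorphic only near $K\subset L^\circ$ is supplied by Poletsky's theorem \cite[Theorem 3.1]{poletsky2013} and the ellipticity methods of \cite{forstneric2017} (whose internal bumping procedure is where the spray does its work), while the $\mathscr C^k$ control along the totally real part $M\cap L'$ is obtained by lifting Proposition \ref{mergelyan} through the spray, componentwise in the trivializing charts, using jet interpolation along $L$ to match the data that is already holomorphic there.

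\emph{Globalization.} Since $\tilde f$ is $\mathscr C^0$-close to $f$ on a neighbourhood $N\subset (L')^\circ$ of $L$, the spray has a local inverse there, yielding a small $\mathscr C^k$ section $\xi$ of $f^*E$, $\bar\partial$-flat to order $k$ along $M$, with $s(f,\xi)=\tilde f$ on $N$. Choose $\chi\in\mathscr C^\infty(X)$ with $\chi\equiv 1$ near $L$ and $\operatorname{supp}\chi\subset N$, taken $\bar\partial$-flat to order $k$ along $M$ by \cite[Proposition 2.7]{manne2011}, and set $g:=s(f,\chi\,\xi)$ on $N$ and $g:=f$ off $N$. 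Then $g=\tilde f$ near $L$, hence holomorphic there; $g=f$ off $\operatorname{supp}\chi$; and $g$ is $\bar\partial$-flat to order $k$ along $M$, being the composition of the holomorphic spray $s$ with the flat data $(f,\chi\,\xi)$. Thus $g\in\mathcal H_k(X,Y;L,M)$. The quantity $e_j^{S,P}(f,g)$ vanishes for all but the finitely many charts $U_j$ meeting $\operatorname{supp}\chi$, and on those it is controlled by the $\mathscr C^k$ smallness of $\chi\,\xi$; taking the approximation $\tilde f$ sufficiently accurate then gives $e_j^{S,P}(f,g)<\epsilon_j$ for every $j\in\mathbb N$.

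\emph{Main obstacle.} The delicate point is to render $f$ genuinely holomorphic across $L\setminus K$ while simultaneously preserving $\bar\partial$-flatness to order $k$ along the \emph{noncompact} totally real set $M$ and keeping the Carleman errors summably small. A spray based at $f$ can only create holomorphy where $g$ literally coincides with the already holomorphic $\tilde f$, so the cut-off must be arranged so that $\chi\equiv1$ exactly on the set where holomorphy is required and the transition occurs inside the region where $\tilde f$ and $f$ are $\mathscr C^k$-close. Verifying that $\chi\,\xi$ remains flat along $M$, and that the scalar input of Proposition \ref{mergelyan} genuinely applies to the compact configuration $Q$ (which is exactly why the bounded-exhaustion-hull hypothesis and Lemma \ref{lem:Lemma2} are invoked), is where the real work lies.
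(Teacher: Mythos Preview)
Your globalization step contains a genuine gap. For the spray-based patching $g=s(f,\chi\xi)$ to yield a map holomorphic on a neighbourhood of $L$, you need $\chi\equiv 1$ there, hence $\operatorname{supp}\chi\supset L$, and so the section $\xi$ solving $s(f,\xi)=\tilde f$ must exist on a full open neighbourhood $N\supset L$. That forces $\tilde f$ to be $\mathscr C^0$-close to $f$ on \emph{all} of $N$, not merely on $K\cup(M\cap L')$. But this is impossible in general: on the open set $L^\circ\setminus K$ the map $f$ is only $\mathscr C^k$ and need not be approximable by any holomorphic map. For a concrete obstruction take $X=Y=\mathbb C$, $K=\overline{\mathbb D}$, $L=\overline{2\mathbb D}$, and let $f(z)=\bar z$ on $\{3/2\le |z|\le 2\}$; any $\tilde f$ holomorphic on $2\mathbb D$ and uniformly close to $\bar z$ on $|z|=2$ would satisfy $z\tilde f(z)\approx 4$ on $|z|=2$, hence on all of $2\mathbb D$ by the maximum principle, contradicting holomorphy at $0$. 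Your ``compact approximation step'' therefore cannot deliver the $\tilde f$ that the globalization step consumes, and the vague appeal to ``Poletsky $+$ ellipticity'' does not supply it either: those methods give closeness on the holomorphically convex piece $K\cup(M\cap L')$, not on the intervening open set $L\setminus K$.

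The paper circumvents exactly this difficulty, and the mechanism is the part you omitted. After first producing (via the graph embedding $G$, the retraction $r$, and Proposition~\ref{mergelyan}) an intermediate map $\hat f$ holomorphic only on a neighbourhood of the \emph{compact} set $S\cap L'$, the paper invokes the basic Oka property with approximation to obtain a \emph{globally} holomorphic $\tilde f\in\mathscr O(X,Y)$ that is close to $\hat f$ on a compact $\mathscr O(X)$-convex neighbourhood $K'$ of $S\cap L'$ and comes with a homotopy $\hat f(\cdot,t)$ connecting it to $\hat f$. The final map is then $g(x)=\hat f(x,\chi(x))$: where $\chi=1$ one has $g=\tilde f$, which is holomorphic on all of $X$ and in particular on $L$, with no need for $\tilde f$ to be close to $f$ on $L\setminus S$. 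Flatness along $M$ in the transition annulus $(L'\setminus L)\cap M$ is arranged by modifying the homotopy there to be the linear one in the $\mathbb C^n$-coordinates furnished by $G$ and $r$. In short, the missing ingredient in your argument is the application of BOPA to pass from a map holomorphic near $S\cap L'$ to one holomorphic on $X$, together with the homotopy-based (rather than spray-based) gluing that this enables.
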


Due to the technical nature of the proof of Lemma \ref{induction} we provide the following sketch.
On the compact piece \( S \cap L'' \) of \( S \), the problem of approximating maps is (locally) reduced to the problem of approximating functions using the holomorphic map \( \Phi \) defined in Equation \ref{Phi}.
Then the local solution is patched together with \( f \) to give us a globally defined approximating map \( \hat f \) which is holomorphic in a neighbourhood of \( S \cap L' \) (see Equation \ref{fhatt}).
We then approximate \( \hat f \) by a homotopic map \( \tilde f \) which is holomorphic on \( L'' \) and approximates \( \hat f \) well on \( S \cap L' \). Using the homotopy from \( \hat f \) to \( \tilde f \), which we change slightly near \( (L' \backslash L) \cap M \)  if necessary, we patch \( \hat f \) to \( \tilde f\) outside a neighbourhood of \( L \) so that the gluing has the desired approximation property.

\begin{proof}

Let \( L' \), \(L '' \) be compact \( \mathscr{O}(X) \)-convex subsets of \( X \) such that
\begin{align*} K \subset L^\circ \subset L \subset (L')^\circ \subset L' \subset (L'')^\circ \subset L''.
\end{align*}

\begin{center}
\includegraphics[scale=0.60]{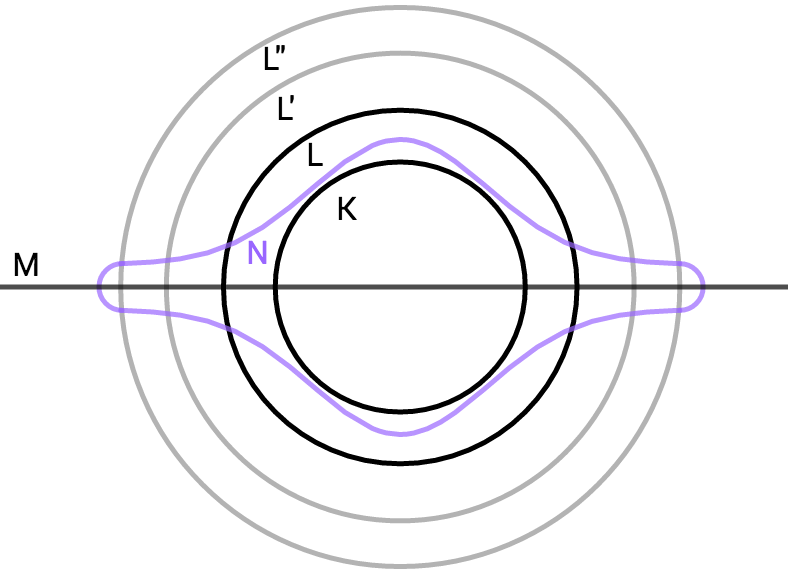}
\end{center}

Let \( F : X \rightarrow X \times Y \),
\begin{align*}
   F(x)= (x,f(x)).
\end{align*}
The set \( F(S \cap L'') \) has a Stein neighbourhood basis by Poletsky's theorem \cite[Theorem 3.1]{poletsky2013}.
Hence we may construct a fibre preserving holomorphic embedding 
\begin{align*}
 G : W \rightarrow X \times \mathbb{C}^n 
\end{align*} of a Stein neighbourhood \( W \supset F(S \cap L'') \) into \( X \times \mathbb{C}^n \)  (for some \( n \in \mathbb{N} ) \) 
as in \cite[Lemma 3.4.3]{forstneric2017}. Let 
\begin{align}
 \label{nk}H := G \circ F |_{N}  \in  \mathcal{H}_k(N, X \times \mathbb{C}^n; M,K),
\end{align}
where \( N \) is a Stein neighbourhood of \( S \cap L'' \) contained in \( F^{-1}(W) \). 
We may write \( H(x) = (x,h(x)) \) since \( G \) is fibre preserving.
Then  \( Q = ((\phi_j, \text{id}_{\mathbb{C}^n}))_{j \in \mathbb{N}} \)
 is a Carleman pair for \( h \) with respect to \( S \).
By \cite[Theorem 3.3.5]{forstneric2017},  there exist a Stein neighbourhood \( \Omega \) of \( G(W) \) and a fibre preserving holomorphic retraction \( r : \Omega \rightarrow G(W) \).
Let 
\begin{align} \label{Phi} \Phi = p_2 \circ G^{-1} \circ r : \Omega \rightarrow Y.
\end{align}
Observe that \begin{align} \label{fk}
 f = p_2 \circ G^{-1} \circ H= p_2 \circ G^{-1} \circ r \circ H=\Phi \circ H.
\end{align}
on \( N \).

Let \( \chi, \chi' : X \rightarrow [0,1] \) be smooth functions, \( \bar \partial \)-flat to order \( k \) along \( M \), such that \( \chi, \chi' \) equal \( 1 \) on neighbourhoods of \( L \), \( L' \) respectively,  \( \chi|_{X \backslash L'} = 0 \) and  \( \chi'|_{X\backslash L''} = 0 \).

Choose \( \delta >0 \) so small that:
\begin{enumerate}
\item The \( 4\delta \)-tube around the graph of \( h \) over \( S \cap L'' \) is a relatively compact subset of \( \Omega \), that is,  \label{contained} \begin{align*} T_{4\delta}=\{ (x,z) \in (S \cap L'') \times \mathbb{C}^n : \|h(x) - z \| < 4 \delta \} \subset \subset \Omega.
\end{align*}
\item \label{rhoclose} \label{intersection} If \(  \theta \in \mathscr{O}(S \cap L'',\mathbb{C}^n) \) satisfies \( e_j^{S \cap L'',Q}(h, \theta) < \delta\) for some \( j \in \mathbb{N} \), then
\begin{align*}
  e_j^{S \cap L'',P}(f,\Phi \circ ((1- \chi')H+ \chi'\Theta)) < \epsilon_j/2,
\end{align*}
where \( \Theta (x) = (x,\theta (x))  \). 
(Such \( \delta  \) exist by \eqref{fk} and the chain rule since \( \Phi \) is holomorphic and hence \( \mathscr{C}^{k+1}\).)
\end{enumerate}

Now choose  \( h' \in \mathscr{O}(N,\mathbb{C}^n)\)  such that \begin{align}\label{hkhkhat}\| h -  h' \|_{S \cap L''} \leq \max_je_j^{S \cap L'',Q}(h,  h') < \delta/4.
\end{align} 
and \( H'(N) \subset G(W) \), where \( H'(x) = (x, h'(x)) \).
Such a map can be found using Proposition \ref{mergelyan} and the retraction \( r \).

Let \( N' \subset \subset N \) be a Stein neighbourhood of \( S \cap L'' \) so small that 
\begin{align} \label{nkprime} T_{2{\delta}}=\{ (x,z) \in N'\times \mathbb{C}^n : \| h(x) - z \| <  2 \delta \} \subset \Omega
\end{align}
and 
\begin{align*} \| h -  h' \|_{N'} < \delta/2.
\end{align*}
We may modify \( \chi' \) outside of a small neighbourhood of \( S \cap L'' \) so that \( \chi' \) has support in \( N' \). 
In particular, \( \chi' \) is still \( 1 \) on an open neighbourhood \( N''\subset N' \) of \( S \cap L' \).
Define \( \hat f: X \rightarrow Y \), 
\begin{align} \label{fhatt} \hat f(x) = \begin{cases} p_2 \circ G^{-1} \circ r \circ (\chi' H' + ( 1 - \chi')H)(x) & \text{ if } x \in N, \\
f(x) & \text{ if } x  \in X \backslash N,
\end{cases}
\end{align}
Note that \( \hat f \) is holomorphic on \( N'' \). Moreover, \begin{align*} e_j^{S,P}(\hat f, f) < \epsilon_j /2
\end{align*}
for every \( j \in \mathbb{N} \) by our choice of \(  h' \). Let 
\begin{align*} \widehat H = \chi' H' + ( 1 - \chi')H,
\end{align*}
so that \( \widehat f = \Phi \circ \widehat H \) on \( N \). 

Let \( K' \) be a compact \( \mathscr{O}(X) \)-convex neighbourhood of \( S \cap L' \) contained in \( N''\cap  (L'')^\circ\). 
Such a neighbourhood exists by \cite[Theorem 5.1.6]{hormander1990} since the sublevel sets  of a strictly plurisubharmonic exhaustion function  are \( \mathscr{O}(X)\)-convex. 

Using the  basic  Oka property with approximation one obtains a sequence  of holomorphic  maps \( (\hat f_{\nu}: X \rightarrow Y)_{\nu \in \mathbb{N}} \)  converging uniformly to \( \hat f \) on \( K' \)  such that each  \( f_\nu \) is homotopic to \( \hat f \).

Clearly \( \widehat F_{\nu} \rightarrow \widehat F \) uniformly on \( K' \) as \( \nu \rightarrow \infty \), where 
\( \widehat F_{\nu}(x) = (x, \hat f_{\nu}(x)) \) and \( \widehat F(x) = (x, \hat f(x)).
\)
Since 
\begin{align}
 \widehat  H(N'') = H'(N'') \subset H'(N)  \subset G(W),
\end{align} it follows from the definition of \( \hat f \) that \( \widehat H = G \circ \widehat F \) on \( N'' \).
Hence
\begin{align*} p_2 \circ G \circ \widehat F_{\nu} \rightarrow p_2 \circ G \circ \widehat F= \hat h
\end{align*}
uniformly on \( K' \) as \( \nu \rightarrow \infty \) because \( p_2 \circ G \) is uniformly continous on a neighbourhood of \( \widehat F(K') \).
Since all sequences are sequences of holomorphic maps and uniform convergence of holomorphic maps implies uniform convergence of derivatives, it follows that we may find \( \nu_0 \in \mathbb{N} \) so large that the following conditions hold:
\begin{enumerate}[(\(a\))]
\item The graph of \( \hat f_{\nu_0} \) above \( S \cap L' \) is contained in \( G^{-1}(T_{2 \delta})\subset W \).
\item \label{tildehkhkhat}  \( \| \hat h - \tilde h \|_{S \cap L'} \leq \max_je_j^{S \cap L',Q}(\hat h, \tilde h) < \delta/2,
\) where \( \widetilde H = G \circ F_{k,\nu_0} \) and  \begin{align*} \tilde h = p_2 \circ \widetilde H. \end{align*}
\item For every \( j \in \mathbb{N} \), 
\begin{align*} e_{j}^{S \cap L',P}( \hat f , \Phi \circ (\chi \tilde H + ( 1 - \chi) \widehat H)) < \epsilon_j/2.
\end{align*}

\end{enumerate} 
Let  \( \tilde f = \hat f_{\nu_0} \). 
Note that
\(\tilde f = \Phi \circ \widetilde H.
\)

Let \( \hat f(x,t) \) be a homotopy from \( \hat f \) to \(  \tilde f \) such that 
\begin{align*} \hat f(\cdot,t) = \Phi \circ ((1-t) \widehat H+ t \widetilde H),
\end{align*}
near \( (L' \backslash L) \cap M \).
Such a homotopy can be obtained by gluing any given homotopy that approximates \( \hat f \) well enough to 
\begin{align*} \Phi \circ ((1-t) \widehat H+ t \widetilde H). \end{align*}
outside of a neighbourhood of \((L' \backslash L) \cap M \). (For example, using a smooth partition of unity at the level of functions.)

One can check that \( g: X \rightarrow Y \), 
\begin{align} \label{mapg} g(x) =\hat  f(x,\chi(x)),
\end{align}
is the desired approximating function.
\end{proof}

\begin{cor}\label{inductionstep}Let \( X, Y, K, M, L \) be as in Lemma \ref{fhat}. Given 
\( f \in \mathcal{H}_k(X,Y;K,M)\), 
a Carleman pair \( P = ((\phi_j,\psi_j))_{j \in \mathbb{N}}\) for \( f\) with respect to \( M \) and a sequence of positive real numbers \( (\epsilon_j)_{j \in \mathbb{N}} \), there is  a map
\begin{align*}
 \hat f \in \mathcal{H}_k(X,Y;L,M) 
\end{align*}
such that \( e_{j}(f,\hat f) < \epsilon_j \) for \( j \in \mathbb{N} \) and \( \hat f \) approximates \( f \) uniformly on \( K \).
\end{cor}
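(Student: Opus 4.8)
The plan is to reduce the statement to Lemma \ref{induction}. The only discrepancies are that the given pair $P$ is a Carleman pair for $f$ with respect to $M$ alone, whereas Lemma \ref{induction} requires a pair with respect to all of $S=K\cup M$, and that here we only ask for uniform (rather than $\mathscr C^k$-Carleman) approximation on $K$. I would therefore enlarge $P$ to a Carleman pair $\tilde P$ for $f$ with respect to $S$ and then choose its error tolerances so small that, on the one hand, the resulting estimates reproduce $e_j^{M,P}(f,\hat f)<\epsilon_j$ for the charts of $P$ and, on the other hand, the order-zero part of the estimates over $K$ forces $\sup_K d(f,\hat f)$ to be as small as desired, where $d$ is a fixed distance function on $Y$.

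To build $\tilde P$ I would keep unchanged every chart $(U_j,\phi_j,V_j,\psi_j)$ of $P$ with $U_j\cap K=\emptyset$; for such a chart $U_j\cap S=U_j\cap M$, so condition (iii) holds with respect to $S$ and $e_j^{S,\tilde P}=e_j^{M,P}$, and I would assign it the tolerance $\epsilon_j$. The only obstruction is condition (iii) on the charts that meet $K$: there $f(U_j\cap K)$ need not be relatively compact in $V_j$. I would repair this by covering each of the compact sets $\overline{U_j}\cap M$ (for the affected indices $j$) together with $K$ by finitely many small charts; since $f$ is continuous on $X$ and holomorphic near $K$, on a sufficiently small chart the image of $f$ lies in an arbitrarily small target chart, so condition (iii) becomes automatic irrespective of the position of the chart relative to $K$. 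The union of the retained charts, the refining charts and the charts over $K$ is a locally finite cover of a neighbourhood of $S$ by admissible charts, that is, a Carleman pair $\tilde P$ for $f$ with respect to $S$.

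It remains to fix the tolerances of the new charts and to invoke Lemma \ref{induction}. For the charts refining a given $U_j$, the multivariate chain rule expresses the derivatives of $\psi_j\circ f$ and $\psi_j\circ\hat f$ of order $\le k$ in the coordinates $\phi_j$, at points of the compact set $\overline{U_j}\cap M$, as polynomials in the corresponding derivatives in the refined coordinates, with coefficients bounded in terms of the transition maps between the refined charts and $(\phi_j,\psi_j)$; making the refined tolerances small enough thus guarantees $e_j^{M,P}(f,\hat f)<\epsilon_j$. For the charts covering $K$ I would use only the order-zero term of the error together with the fact that $f(K)$ is compact and met by finitely many of these charts, so that sufficiently small tolerances force uniform approximation on $K$. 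Applying Lemma \ref{induction} to $f$, the pair $\tilde P$ and these tolerances yields $\hat f\in\mathcal H_k(X,Y;L,M)$ with $e_\gamma^{S,\tilde P}(f,\hat f)$ below the prescribed bound for every chart $\gamma$; reading this off on the three families of charts gives exactly $e_j^{M,P}(f,\hat f)<\epsilon_j$ for all $j\in\mathbb N$ and uniform approximation on $K$. The main obstacle is precisely this quantitative tolerance transfer: turning the qualitative change-of-pair principle recorded after Definition \ref{carleman} into prescribed bounds for $P$ from prescribed bounds for $\tilde P$. This is the same ``elementary albeit tedious'' chain-rule estimation flagged there, carried out on the compact sets $\overline{U_j}\cap M$, and it is the only genuinely technical point in the argument.
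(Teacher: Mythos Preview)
Your proposal is correct and follows essentially the same route as the paper. The paper's own proof is two sentences: enlarge $P$ to a Carleman pair for $f$ with respect to $K\cup M$ by adding finitely many charts (possible since $K$ is compact), then apply Lemma \ref{induction} with a suitably chosen tolerance sequence. You carry out exactly this plan, but more carefully: you notice that the finitely many original charts $U_j$ meeting $K$ need not satisfy condition (iii) with respect to $S$, so you refine those charts rather than merely adding new ones, and you make explicit the chain-rule bookkeeping needed to pass prescribed tolerances from the enlarged pair $\tilde P$ back to $P$ and to extract uniform approximation on $K$ from the order-zero part of the estimates. This is precisely the ``elementary, albeit tedious'' equivalence-of-pairs computation the paper alludes to after Definition \ref{carleman}, made quantitative. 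So your argument is the paper's argument with the details filled in; there is no substantive difference in strategy.
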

\begin{proof} Clearly by adding finitely many pairs \( P \) can be made into a Carleman pair for \( f \) with respect to \( K \cup M \). Then we apply Lemma \ref{fhat} for a suitably chosen sequence \( (\epsilon_j)_{j \in \mathbb{N}} \) to obtain the desired result.
\end{proof}

\begin{thm} \label{submain} Let \( X \), \( Y \), \( K \), \( M \) be as in Theorem \ref{main}. 
Given  \( f \in \mathcal{H}_k(X,Y;K,M)\), a Carleman pair \( P = ((\phi_j,\psi_j))_{j \in \mathbb{N}}\) for \( f\) with respect to \(  M \) 
and  a sequence of positive real numbers \( (\epsilon_j)_{j \in \mathbb{N}} \), there is a map
\begin{align*}
 \bar f \in \mathscr{O}(X,Y) 
\end{align*}
such that \( \bar f \) approximates \( f \) uniformly on \( K \) and  \( e_{j}(f,\bar f) < \epsilon_j \) for \( j \in \mathbb{N} \). 
\end{thm}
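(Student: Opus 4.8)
The plan is to construct $\bar f$ as the limit of a sequence $(f_l)_{l\ge 0}$ obtained by iterating Corollary \ref{inductionstep}, pushing the region of holomorphy outward along an exhaustion of $X$ while keeping tight control on the Carleman data along $M$. First I would invoke Lemma \ref{lem:Lemma3} to fix a normal exhaustion $(K_l)_{l\in\mathbb N}$ of $X$ by compact $\mathscr O(X)$-convex sets with each $K_l\cup M$ being $\mathscr O(X)$-convex, arranged so that $K\subset K_1^\circ$. Setting $f_0=f\in\mathcal H_k(X,Y;K,M)$, I apply Corollary \ref{inductionstep} with $L=K_1$ to obtain $f_1\in\mathcal H_k(X,Y;K_1,M)$; thereafter, given $f_l\in\mathcal H_k(X,Y;K_l,M)$, I apply the Corollary with $(K,L)=(K_l,K_{l+1})$ to produce $f_{l+1}\in\mathcal H_k(X,Y;K_{l+1},M)$ approximating $f_l$ uniformly on $K_l$ and satisfying $e_j(f_l,f_{l+1})<\delta_j^{(l+1)}$ for all $j$, with tolerances $(\delta_j^{(l+1)})_j$ specified in the next step.

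The core of the argument is the choice of tolerances so that two convergence requirements hold at once. Since the cover $(U_j)$ underlying $P$ is locally finite and each $\overline{U_j}$ is compact, each compact $K_m$ meets only finitely many $U_j$ and each $U_j$ lies in some $K_{m(j)}$. I would pick the uniform tolerances at step $l+1$ summable in $l$ on each $K_m$ (say bounding the difference of $f_{l+1}$ and $f_l$ in local charts on $K_l$ by $2^{-l}$), which makes $(f_l)$ uniformly Cauchy on every $K_m$; and I would pick the Carleman tolerances so that $\sum_{l\ge 1}\delta_j^{(l)}<\epsilon_j$ for every $j$ (for instance $\delta_j^{(l)}<\epsilon_j\,2^{-l-1}$), while simultaneously demanding the perturbations so small that every image $f_l(U_j\cap M)$ remains in $V_j$, so that each $f_l$ stays $P$-close to $f$.

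Granting these choices, I set $\bar f=\lim_{l\to\infty}f_l$. The limit is holomorphic: for each $m$ and all $l\ge m+1$ the map $f_l$ is holomorphic on the fixed open set $K_{m+1}^\circ\supset K_m$, and $f_l\to\bar f$ locally uniformly on $K_{m+1}^\circ$, so $\bar f$ is holomorphic there; as $m$ is arbitrary, $\bar f\in\mathscr O(X,Y)$. Uniform approximation on $K\subset K_1^\circ$ follows from the first-step estimate together with the summable tail. Finally, because every $f_l$ is $P$-close to $f$, on each $U_j\cap M$ the quantity $e_j$ is a genuine $\mathscr C^k$-distance between chart representations, so the triangle inequality gives $e_j(f,\bar f)\le\sum_{l\ge 1}e_j(f_{l-1},f_l)\le\sum_{l\ge 1}\delta_j^{(l)}<\epsilon_j$.

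The main obstacle I anticipate is the simultaneous bookkeeping of the two families of tolerances: the uniform errors that force holomorphicity of the limit and the $\mathscr C^k$-Carleman errors along $M$ that control the jets. Local finiteness of the cover is what renders this feasible, since each index $j$ then receives only finitely many, controllable contributions; but one must verify that the tolerances chosen at step $l+1$ to secure $P$-closeness and the $e_j$-bounds are compatible with those needed for uniform convergence on the inner sets, and that the chart representations defining $e_j$ remain valid throughout the induction.
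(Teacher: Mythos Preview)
Your proposal is correct and follows essentially the same route as the paper: iterate Corollary \ref{inductionstep} along the exhaustion from Lemma \ref{lem:Lemma3} with summable tolerances of order $\epsilon_j/2^l$, then pass to the uniform limit. The only point the paper makes more explicit is the final passage to the limit in $e_j(f,\bar f)$: your infinite triangle inequality tacitly requires $e_j(f_l,\bar f)\to 0$, i.e.\ $\mathscr C^k$-convergence of the chart representations on $U_j\cap M$, and the paper secures this via Cauchy estimates using the extended charts in conditions (i)--(ii) of the definition of a Carleman pair, since for all large $l$ the map $f_l$ is holomorphic on a fixed neighbourhood of $\overline{U_j}$.
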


\begin{proof}

Let \( f \in \mathscr{C}^k(X,Y) \) be \( \bar \partial \)-flat to order \( r \) along \( M \) 
and holomorphic on a neighbourhood of \( K \) and let \( P  =((\phi_j,\psi_j))_{j \in \mathbb{N}} \) be a Carleman pair for \( f \) with respect to \(  M \).
Suppose  \( \epsilon_0, \epsilon_1, \epsilon_2, \dots \) is a sequence of positive real numbers. 
Without loss of generality we may suppose that for each \( j \in \mathbb{N} \),
\begin{align} \label{closecall} \epsilon_j < \inf\limits_{y \in f(U_j \cap S)}d_u(\psi_j(y) ,\mathbb{C}^N \backslash  \psi_j(V_j)),
\end{align}
where \( U_j \) is the domain of \( \phi_j \), \( V_j \) is the domain of \(\psi_j \) and \( d_u \) is the usual metric on \( \mathbb{C}^N \) (\( N = \dim Y \)). Observe that this distance is positive by \eqref{relcomp} of the definition of Carleman pair.
In what follows we will denote the domain of a map \( \phi \) by \( \mathcal{D}(\phi) \).

Let \( (K_l)_{l\in\mathbb{N}} \) be a normal exhaustion of \( X \) by compact \( \mathscr{O}(X)\)-convex sets such 
that the set \( S_l = K_l \cup M \) is \( \mathscr{O}(X) \)-convex for every \( l \in \mathbb{N} \) (see Lemma \ref{lem:Lemma3}).
We may assume that \( K_1 = K \). Let \( d \) be a distance function on \( Y \) induced by a Riemannian metric.

By Corollary \ref{inductionstep}, we can recursively construct a sequence \( (f_l)_{l \in \mathbb{N}} \) of maps  \( f_l \in  \mathcal{H}_k(X,Y;M,K_l)\)
(this notation was introduced in \S \ref{flat})  such that for each \( l \in \mathbb{N} \),
\( f_{l+1} \) is \( P \)-close to \( f_{l} \) on \( M \),
\begin{align}
 \label{uniformly}\sup\limits_{x \in K_l} d(f_{l+1}(x),f_l(x)) &< \epsilon_0/2^l, \text{ and }\\
 \label{inductionhyp}e_j^{S_{l},P}(f_{l+1},f_{l}) &< \epsilon_j/2^{l},
\end{align}
for \( j \in \mathbb{N} \).
This sequence clearly converges uniformly on compacts since \( (K_{l})_{l \in \mathbb{N}} \) is a normal exhaustion of \( X \).

Now let us show that \(\bar f \) is the desired approximating function.
Note  that \( \bar f \) is holomorphic since it is the uniform limit of holomorphic maps.
Moreover, one can show that \( \bar f\) is \( P \)-close to \( f \) using the inequality \eqref{closecall} and   \eqref{inductionhyp}. 
Note that \eqref{uniformly} easily gives us that \( \bar f \) approximates \( f \) uniformly on \( K \).
It remains to show that 
\begin{align*} e_j^{S_1,P}(f,\bar f) < \epsilon_j
\end{align*}
for every \( j \in \mathbb{N}. \) 
It follows from (i) and the fact that \( e_j^{S_1,P} \) satisfies the triangle inequality that for each \( j \in \mathbb{N} \) and each \( k \in \mathbb{N}\),
\begin{align} \label{est} e_{j}^{S_1,P}(f_1,f_k)
< e_j^{S_1,P}(f_1,f_2) + \epsilon_j/2.
\end{align}

For each \( j \in \mathbb{N} \) there is a chart  \( \tilde \phi_j \) which extends \( \phi_j \)  and for which 
\begin{align*} \mathcal{D}(\phi_j) \subset \subset \mathcal{D}(\tilde \phi_j)
\end{align*}
(this is property \eqref{handy} in the definition of Carleman pair).
Also, for \( l \) sufficiently large \( f_l \) is holomorphic on a neighbourhood of \( \overline{\mathcal{D}(\phi_j)} \) and since \( f_l \rightarrow \bar f \) uniformly, it follows that 
\begin{align*} (\psi_j \circ f_l \circ \tilde \phi_j)_{l \in \mathbb{N}}
\end{align*}
is a sequence of holomorphic functions on some  (fixed) neighbourhood \( U \)  of \( \overline{\mathcal{D}(\phi_j^1)}  \) converging uniformly to \( \psi_j \circ \bar f \circ \tilde \phi_j \)  on \( U \) as \( l \rightarrow \infty \).
Hence we have uniform convergence of all derivatives on \( \overline{\mathcal{D}(\phi_j^1)}  \) which is a compact subset of \( U \). 
Hence it follows  from \eqref{est} that for each \( j \in \mathbb{N} \),
\begin{align*} e_j^{S_1,P}(f,\bar f) = \lim\limits_{k \rightarrow \infty} e_j^{S_1,P}(f,f_k) \leq e_j^{S_1,P}(f_1,f_2) + \epsilon_j/2 < \epsilon_j,
\end{align*}
as required.
\end{proof}

This concludes our proof of Theorem \ref{main} which was outlined in the beginning of this section.

We finish this paper with a discussion of how interpolation can be added to Theorem \ref{main}. The two modes of interpolation that we consider are as follows:
\begin{enumerate}[(a)]\item \label{statementa} If \( A\subset K \cup (X\backslash M) \) is a closed analytic subvariety on which \( f \) is holomorphic, then the approximating map \( g \) can be chosen to agree with \( f \) along \( A \). \item  \label{statementb}  If  \( A \subset M \backslash K \), \( B \subset K \cup (X\backslash M)  \) are closed discrete subsets so that \( f \) is holomorphic on a neighbourhood of \( B \) and  \( (d_{p})_{p \in B} \) is a family of non-negative integers, then we may choose the approximating map \( g \) to agree with \( f \) up to order \( k \) at each \( p \in A \) and up to order \( d_p \) at each \( p \in B \).
\end{enumerate}

Both statements can be proven in essentially the same way as Theorem \ref{main}.
In each case the only appreciable difference is in induction step, that is, Lemma \ref{induction}.
We shall now explain precisely what these differences are.

For Statement (a) first one uses Proposition \ref{compactcase} to obtain \( h' \) which approximates \( h \) and interpolates along \(A \cap N \). 
Then the map \( \hat f \) defined as in \eqref{fhatt} agrees with \( f \) along \( A \).
Thus using the basic Oka property with approximation and interpolation (BOPAI) we may take \( \hat f(\cdot, t) \) (and hence \( g \)) to agree with \( f \) along \( A \), as required.

For Statement (b) one chooses \( \chi, \chi' \) to take values in \( \{0,1\} \) on a neighbourhood of \( A \cup B \).
Using Proposition \ref{compactcase} we may assume that \( h' \) jet interpolates \( h \) at points of \( A \cup B \)  in its domain \( N \). Then \( \hat f \) jet interpolates \( f \) along \( A \cup B \) because \( \chi' \) takes values in \( \{ 0,1\} \) in a neighbourhood of \( A \cup B \). 
Thus by the basic Oka property with jet interpolation we may choose \( \tilde f\) jet interpolating \(\hat  f \) (and hence \( f \)) along \( (A \cup B) \cap \chi^{-1}(1) \). Finally, because \( \chi\) takes values in \( \{0,1\} \) it follows that the map \( g \) defined in  \eqref{mapg} jet interpolates \( f \) along \( A \cup B \).

Jet interpolation along a closed analytic subvariety \( A \) of positive dimension seems more difficult  because we cannot separate the gluing set from the set on which we wish to interpolate  as we have done for Statement (b) above.
Hence there seems to be no reason to expect that the map  \( \hat f \) defined in \eqref{fhatt} is holomorphic on a neighbourhood of \( A \) and  no reason to expect that we could apply the basic Oka property with jet interpolation to \( \hat f \) as we would like.

\subsection*{Acknowledgements} The author is supported by  grant MR-39237 from ARRS, Republic of Slovenia, associated to the research
program P1-0291 {\em Analysis and Geometry}. 
The author would like to thank F. Forstneri\v{c} for his support and guidance.

\bibliographystyle{hacm}
\bibliography{PaperarXiv2}

\end{document}